\newcommand{\ds}{\displaystyle}
\newcommand{\R}{\mathbb{R}}
\newcommand{\N}{\mathbb{N}}
\newcommand{\eps}{\varepsilon}
\newtheorem{theorem}{Theorem}
\newtheorem{lemma}[theorem]{Lemma}
\newtheorem{corollary}[theorem]{Corollary}
\theoremstyle{definition}
\newtheorem{example}[theorem]{Example}
\newtheorem{definition}[theorem]{Definition}
\newcommand{\Om}{\Omega}
\newcommand{\bp}{\begin{proof}}
\newcommand{\ep}{\end{proof}}
\date{1 July 2021}
\begin{document}
\title{Efficiency and localisation for the first Dirichlet eigenfunction}

\author{\renewcommand{\thefootnote}{\arabic{footnote}}
M. van den Berg\,
\footnotemark[1]
\\
\renewcommand{\thefootnote}{\arabic{footnote}}
F. Della Pietra\,
\footnotemark[2]
\\
\renewcommand{\thefootnote}{\arabic{footnote}}
G. di Blasio\,
\footnotemark[3]
\\
\renewcommand{\thefootnote}{\arabic{footnote}}
N. Gavitone\,
\footnotemark[2]
\\
}
\footnotetext[1]{ School of Mathematics, University of Bristol, Fry Building,
Woodland Road, Bristol BS8 1UG, United Kingdom. \texttt{mamvdb@bristol.ac.uk}}

\footnotetext[2]{
Dipartimento di Matematica e Applicazioni ``Renato Caccioppoli'',
Universit\`a degli Studi di Napoli Federico II,
Via Cintia, Monte S. Angelo, I-80126 Napoli, Italy.
\texttt{f.dellapietra@unina.it},\,\, \texttt{nunzia.gavitone@unina.it}}

\footnotetext[3]{
Dipartimento di Matematica e Fisica,
Universit\`a degli Studi della Campania, L. Vanvitelli,
Viale Lincoln, 5 81100 Caserta, Italy.
\texttt{giuseppina.diblasio@unicampania.it}}

\maketitle

\begin{abstract}\noindent
Bounds are obtained for the efficiency or mean to max ratio $E(\Omega)$ for the first Dirichlet eigenfunction (positive)
for open, connected sets $\Omega$ with finite measure in Euclidean space
$\R^m$. It is shown that (i) localisation implies vanishing efficiency, (ii) a vanishing upper bound for the efficiency implies localisation, (iii) localisation occurs for the first Dirichlet eigenfunctions for a wide class of elongating bounded, open, convex and planar sets,
(iv) if $\Omega_n$ is any quadrilateral with perpendicular diagonals of lengths $1$ and $n$ respectively, then the sequence of first Dirichlet eigenfunctions localises, and $E(\Omega_n)=O\big(n^{-2/3}\log n\big)$.
This disproves some claims in the literature. A key technical tool is the Feynman-Kac formula.
\vspace{5mm}
\newline \noindent{Mathematics Subject Classification (2020)}: 35J25, 35P99.
\vspace{5mm}
\newline\textbf{Keywords}: Efficiency, first Dirichlet eigenfunction, localisation.
\end{abstract}

\section{Introduction\label{sec1}}
Let $\Omega$ be a non-empty open set in Euclidean space $\R^m,\, m\ge 2,$ with boundary
$\partial\Omega$, and finite measure $|\Omega|$. It is well known
that the spectrum of the Dirichlet Laplacian acting in $L^2(\Omega)$
is discrete, and consists of an increasing sequence of eigenvalues
\begin{equation*}
\lambda_1(\Omega)\le \lambda_2(\Omega)\le ...,
\end{equation*}
accumulating at infinity. We denote a corresponding orthonormal
basis of eigenfunctions by $\{u_{j,\Omega},j\in
\N\}$,
\begin{equation*}
 -\Delta u_{j,\Omega}=\lambda_j(\Omega)u_{j,\Omega},\, u_{j,\Omega} \in
H_0^1(\Omega).
\end{equation*}
If $\lambda(\Omega):=\lambda_1(\Omega)$ has
multiplicity $1$, then $u_{1,\Omega}$ is uniquely defined up to a sign. This is the case if $\Omega$ is connected, for example.
We then write and choose, $u_{\Omega}:=u_{1,\Omega}>0$.

The Rayleigh-Ritz variational principle asserts that
\begin{equation}\label{e3}
\lambda(\Omega)=\inf_{\varphi\in
H_{0}^{1}(\Omega)\setminus\{0\}}\frac{\ds\int_{\Omega}|\nabla
\varphi |^{2}}{\ds\int_{\Omega}\varphi^{2}}.
\end{equation}
The \textit{efficiency} or \textit{mean to max ratio} of $u_{\Omega}$ is defined by
\begin{equation}\label{e4}
E(\Omega)=\frac{\Vert u_{\Omega}\Vert_1}{|\Omega|\| u_{\Omega} \|_{\infty}},
\end{equation}
where $\| \cdot\|_p,\, 1\le p\le \infty$ denotes the standard $L^p(\Omega)$ norm.

The study of $E(\Omega)$ goes back to the pioneering results of
\cite{ps,sperb}. In Theorem 3 of \cite{ps}, it was shown that if $\Omega$ is bounded and convex then
\begin{equation}\label{e5}
E(\Omega)\le \frac{2}{\pi},
\end{equation}
with equality in \eqref{e5} if $\Omega$ is a bounded interval in $\R$. A non-linear version has been proved in \cite{DPBG} for the $p$-Laplacian with $1<p<\infty$.
More general results have been obtained in \cite{CG}. It follows from inequality (3) and the main theorem in that paper that if $\Omega$ is a bounded region in $\R^m$, then
\begin{equation*}
E(\Omega)\ge E(B) \frac{|B|}{|\Omega|} \left(\frac{\lambda(B)}{\lambda(\Omega)}\right)^{m/2},
\end{equation*}
where $B$ is a ball in $\R^m$.

Moreover, it was asserted in Table 1 in \cite{ps} that
$\frac{2}{\pi}$ is the limit of the efficiency of a thinning annulus in $\R^m$. The proof of this assertion (Theorem \ref{the5}) will be given in Section \ref{sec4} below. There we will also compute the efficiency for the equilateral triangle, the square, and the disc. These data support the conjectures that (i) the efficiency of a bounded, convex planar set is maximised by the disc, (ii) if $P_n\subset \R^2$ is a regular $n$-gon then $n\mapsto E(P_n)$ is increasing. We note that the efficiency for an arbitrarily long rectangle is $(2/\pi)^2\approx 0.4053$, whereas the efficiency of a disc is approximately $0.4317$.

Recently a connection has been established between localisation of eigenfunctions and an effective potential such as the inverse of the torsion function (see \cite{ADFJM1}). In a similar spirit, it has been shown in certain special cases, such as a bounded interval in $\R$ or a square in $\R^2$, that the eigenfunctions of the Schr\"odinger operator of Anderson type localise (see \cite{ADFJM2}, and \cite{DFJM}).

The first part of the definition below is very similar to the one in \cite{grebenkov}, (formula (7.1) for $p=1$).
\begin{definition}\label{def1}
Let $(\Omega_n)$ be a sequence of non-empty open sets in $\R^m$ with $|\Omega_n|<\infty$.
\begin{enumerate}\item[\textup{(i)}] We say that a sequence $\big(f_{n}\big)$ with $f_n\in L^2(\Omega_n),\,n\in \N$ and $\|f_{n}\|_2=1$ localises if there exists a sequence
of measurable sets $A_n\subset\Omega_n$ such that
\begin{equation}\label{e12}
\lim_{n\rightarrow\infty}\frac{|A_n|}{|\Omega_n|}=0,\,\,\, \lim_{n\rightarrow\infty}\int_{A_{n}}f^2_{n}=1.
\end{equation}
\item[\textup{(ii)}] We say that a sequence $\big(f_{n}\big)$ with $f_n\in L^{\infty}(\Omega_n),\,f_n\ge 0,\,\|f_n\|_{\infty}>0,\,n\in \N$ has vanishing efficiency if
\begin{equation*}
\lim_{n\rightarrow\infty}\frac{\|f_n\|_1}{|\Omega_n|\|f_n\|_{\infty}}=0.
\end{equation*}
\end{enumerate}
\end{definition}

We have the following elementary observations.
\begin{lemma}\label{lemA} If $\Omega$ is a non-empty open set with finite Lebesgue measure, and if $\|f\|_2=1,\,0<\|f\|_{\infty}<\infty,$ with $f\ge 0$, then
\begin{enumerate}
\item[\textup{(i)}]
\begin{equation}\label{e18}
|\Omega|^{-1}\| f\|_{\infty}^{-2}\le \frac{\|f\|_1}{|\Omega|\|f\|_{\infty}}\le |\Omega|^{-1/2}\| f\|_{\infty}^{-1}.
\end{equation}
\item[\textup{(ii)}]
\begin{equation}\label{e19}
\frac{\|f\|_1}{|\Omega|\|f\|_{\infty}}\le |\Omega|^{-1}\|f\|_1^2.
\end{equation}
\end{enumerate}
\end{lemma}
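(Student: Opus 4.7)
The plan is to derive both inequalities from two completely standard facts: the pointwise bound $f \le \|f\|_\infty$ (which turns $L^2$ norms into $L^1$ and $L^\infty$ norms) and the Cauchy--Schwarz inequality on the finite-measure set $\Omega$. The lemma is labelled ``elementary observations'' by the authors, and indeed I do not expect any real obstacle; the main thing to get right is to apply each inequality in the correct direction so that the normalisation $\|f\|_2=1$ drops out cleanly.

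For part~(i), I would start with the lower bound. Since $0 \le f \le \|f\|_\infty$ almost everywhere, we have $f^2 \le \|f\|_\infty f$, and integrating yields
\begin{equation*}
1 \;=\; \|f\|_2^2 \;\le\; \|f\|_\infty \, \|f\|_1,
\end{equation*}
so $\|f\|_1 \ge \|f\|_\infty^{-1}$. Dividing both sides by $|\Omega|\,\|f\|_\infty$ gives exactly the left inequality in \eqref{e18}. For the upper bound in \eqref{e18}, I would apply Cauchy--Schwarz against the constant function $1$ on $\Omega$ (which is in $L^2(\Omega)$ because $|\Omega|<\infty$):
\begin{equation*}
\|f\|_1 \;=\; \int_\Omega f\cdot 1 \;\le\; \|f\|_2\, |\Omega|^{1/2} \;=\; |\Omega|^{1/2}.
\end{equation*}
Dividing by $|\Omega|\,\|f\|_\infty$ then yields the right inequality in \eqref{e18}.

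Part~(ii) is essentially a reformulation of the same computation used for the lower bound in (i). Starting again from $\|f\|_\infty\,\|f\|_1 \ge \|f\|_2^2 = 1$, I would rewrite this as $\|f\|_\infty^{-1} \le \|f\|_1$ and then multiply both sides by $\|f\|_1/|\Omega|$ to obtain
\begin{equation*}
\frac{\|f\|_1}{|\Omega|\,\|f\|_\infty} \;\le\; \frac{\|f\|_1^2}{|\Omega|},
\end{equation*}
which is \eqref{e19}. Since every step is a one-line application of a standard inequality, there is no hard step to single out; the value of the lemma is organisational, packaging the bounds in the form that will be invoked when relating vanishing efficiency to localisation later in the paper.
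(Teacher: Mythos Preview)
Your proof is correct and follows essentially the same approach as the paper: the authors simply record the chain $1=\|f\|_2\le \|f\|_{\infty}\|f\|_1\le \|f\|_{\infty}|\Omega|^{1/2}$ (from $f^2\le\|f\|_\infty f$ and Cauchy--Schwarz) and note that all three inequalities follow immediately, which is exactly what you have spelled out in detail.
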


The proofs of \eqref{e18} and \eqref{e19} are immediate, since by Cauchy-Schwarz,
\begin{equation*}
1=\|f\|_2\le \|f\|_{\infty}\|f\|_1\le \|f\|_{\infty}|\Omega|^{1/2}.
\end{equation*}

\begin{lemma}\label{lem0}
For $n\in \N$, let $f_n\in L^2(\Omega_n)$ with $\|f_{n}\|_2=1, \, f_n\ge 0,$ and $|\Omega_n|<\infty$. Then $\big(f_{n}\big)$ localises if and only if
\begin{equation}\label{e13}
\lim_{n\rightarrow\infty}\frac{1}{|\Omega_n|}\|f_n\|_1^2=0.
\end{equation}
\end{lemma}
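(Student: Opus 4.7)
The plan is to prove both implications separately, using the $L^2$-normalisation $\|f_n\|_2 = 1$ together with elementary Cauchy--Schwarz and Chebyshev arguments.

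For the forward direction, assume $(f_n)$ localises with witnesses $A_n \subset \Omega_n$ satisfying \eqref{e12}. I would split $\|f_n\|_1 = \int_{A_n} f_n + \int_{\Omega_n \setminus A_n} f_n$ and apply Cauchy--Schwarz to each term separately. This gives
\begin{equation*}
\|f_n\|_1 \le |A_n|^{1/2}\Bigl(\int_{A_n} f_n^2\Bigr)^{1/2} + |\Omega_n|^{1/2}\Bigl(1 - \int_{A_n} f_n^2\Bigr)^{1/2}.
\end{equation*}
Squaring and using $(a+b)^2 \le 2a^2+2b^2$, then dividing by $|\Omega_n|$, yields
\begin{equation*}
\frac{\|f_n\|_1^2}{|\Omega_n|} \le 2\,\frac{|A_n|}{|\Omega_n|} + 2\Bigl(1 - \int_{A_n} f_n^2\Bigr),
\end{equation*}
and both terms tend to $0$ by hypothesis. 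This establishes \eqref{e13}.

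For the converse, assume \eqref{e13} and construct the sets $A_n$ as super-level sets. Set $t_n = |\Omega_n|^{-1/2}$ and define $A_n = \{x \in \Omega_n : f_n(x) > t_n\}$. Chebyshev's inequality gives $|A_n| \le \|f_n\|_1/t_n$, so
\begin{equation*}
\frac{|A_n|}{|\Omega_n|} \le \frac{\|f_n\|_1}{t_n |\Omega_n|} = \frac{\|f_n\|_1}{|\Omega_n|^{1/2}},
\end{equation*}
which tends to $0$ since its square is precisely $\|f_n\|_1^2/|\Omega_n| \to 0$. On the complement,
\begin{equation*}
\int_{\Omega_n \setminus A_n} f_n^2 \le t_n \int_{\Omega_n \setminus A_n} f_n \le t_n \|f_n\|_1 = \frac{\|f_n\|_1}{|\Omega_n|^{1/2}} \longrightarrow 0,
\end{equation*}
so $\int_{A_n} f_n^2 = 1 - \int_{\Omega_n \setminus A_n} f_n^2 \to 1$.

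There is no real obstacle here; the only mildly subtle point is the choice of threshold $t_n$, which must sit in the window $\|f_n\|_1/|\Omega_n| \ll t_n \ll 1/\|f_n\|_1$ to make both $|A_n|/|\Omega_n|$ and $t_n\|f_n\|_1$ vanish simultaneously. The geometric mean $t_n = |\Omega_n|^{-1/2}$ lies in this window exactly when \eqref{e13} holds, which is why the two conditions are equivalent rather than merely implying one another.
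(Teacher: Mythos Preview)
Your proof is correct and follows essentially the same strategy as the paper: split over $A_n$ and $\Omega_n\setminus A_n$ with Cauchy--Schwarz for one direction, and take super-level sets $\{f_n>\text{threshold}\}$ for the other. The one noteworthy difference is in the converse: the paper bounds $\int_{\Omega_n\setminus A_n}f_n^2\le\alpha^2|\Omega_n|$ (using $f_n\le\alpha$ pointwise), which forces the threshold $\alpha=a_n^{1/4}|\Omega_n|^{-1/2}$ to depend on $a_n:=\|f_n\|_1^2/|\Omega_n|$; your sharper estimate $\int_{\Omega_n\setminus A_n}f_n^2\le t_n\|f_n\|_1$ lets you take the fixed threshold $t_n=|\Omega_n|^{-1/2}$, which is a small but genuine simplification.
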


By \eqref{e19} we have that if $\big(f_{n}\big)$ is localising then the mean to max ratio of $f_n$ is vanishing as $n\rightarrow\infty$.
We were unable to prove that if $\big(u_{\Omega_n}\big)$ has vanishing efficiency then $\big(u_{\Omega_n}\big)$ localises.

Denote by $\rho(\Omega)=\sup\{\min\{|x-y|:y\in \partial \Omega\},\,x\in \Omega\}$ the inradius of $\Omega$, by
$\textup{diam}(\Omega)=\sup\{|x-y|:x\in\Omega,\,y\in \Omega\}$ the diameter of $\Omega$, and by $w(\Omega)$ the width of $\Omega$. For a measurable set $A$ in $\R^k$ with $k<m$ we denote its $k$-dimensional Lebesgue measure by $|A|_k$. The indicator function of a set $A$ is denoted by ${\bf 1}_A$. We define for $\nu\ge0$, $j_{\nu}$ to be the first positive zero of the Bessel function $J_{\nu}$.

Below we show that sets with small $E(\Omega)$ have small inradius, and large diameter.
\begin{theorem}\label{the3} For all open, connected $\Omega\subset\R^m$ with $0<|\Omega|<\infty$,
\begin{equation}\label{e14}
\frac{\rho(\Omega)}{|\Omega|^{1/m}}\le \bigg(\frac{ej_{(m-2)/2}^2}{2\pi m}\bigg)^{1/2}E(\Omega)^{1/m}.
\end{equation}
If $\Omega$ is open, planar, bounded, and convex, then
 \begin{equation}\label{e15}
\frac{\textup{diam}(\Omega)}{|\Omega|^{1/2}}\ge\bigg(\frac{\pi}{ej_0^2}\bigg)^{1/2}E(\Omega)^{-1/2}.
\end{equation}
\end{theorem}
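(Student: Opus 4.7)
\emph{Part (i).} The plan is to combine domain monotonicity with a Feynman--Kac / heat-kernel bound on the ratio $\|u_\Omega\|_\infty/\|u_\Omega\|_1$. Since $B_{\rho(\Omega)}\subset\Omega$, domain monotonicity gives
\[
\lambda(\Omega)\rho(\Omega)^2\le \lambda(B_{\rho(\Omega)})\rho(\Omega)^2=j_{(m-2)/2}^2.
\]
For the second ingredient, writing $u=u_\Omega$ and using the semigroup identity $u(x)=e^{\lambda(\Omega)t}\int_\Omega p_\Omega(x,y,t)u(y)\,dy$ together with the universal Gaussian upper bound $p_\Omega(x,y,t)\le(4\pi t)^{-m/2}$ on the Dirichlet heat kernel, one obtains $\|u\|_\infty\le e^{\lambda(\Omega)t}(4\pi t)^{-m/2}\|u\|_1$ for every $t>0$. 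Minimising in $t$ (optimum $t_\ast=m/(2\lambda(\Omega))$) gives
\[
\|u_\Omega\|_\infty\le\left(\frac{e\lambda(\Omega)}{2\pi m}\right)^{m/2}\|u_\Omega\|_1,
\]
equivalently $E(\Omega)|\Omega|\ge(2\pi m/(e\lambda(\Omega)))^{m/2}$. Eliminating $\lambda(\Omega)$ between this lower bound and the monotonicity upper bound yields $\rho(\Omega)^m\le(ej_{(m-2)/2}^2/(2\pi m))^{m/2}\,E(\Omega)\,|\Omega|$, which is \eqref{e14} after taking $m$-th roots and dividing by $|\Omega|^{1/m}$.

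\emph{Part (ii).} I would specialise Part (i) to $m=2$, obtaining $\rho(\Omega)^2\le(ej_0^2/(4\pi))E(\Omega)|\Omega|$, and couple this with the classical planar convex-geometry inequality
\[
|\Omega|\le 2\rho(\Omega)\,\textup{diam}(\Omega),
\]
valid for every bounded open planar convex set (and tight in the limit of long thin strips). Squaring and substituting,
\[
|\Omega|^2\le 4\rho(\Omega)^2\,\textup{diam}(\Omega)^2\le\frac{ej_0^2}{\pi}\,E(\Omega)\,|\Omega|\,\textup{diam}(\Omega)^2,
\]
and cancelling one factor of $|\Omega|$ gives $\textup{diam}(\Omega)^2/|\Omega|\ge\pi/(ej_0^2\,E(\Omega))$; \eqref{e15} then follows by taking square roots.

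The Feynman--Kac step in Part (i) is routine, so the real technical input for Part (ii) is the convex-geometry inequality $|\Omega|\le 2\rho(\Omega)\,\textup{diam}(\Omega)$. I would either cite it as a classical Bonnesen-type estimate, or derive it in a short lemma using Brunn's concavity of the slice-length function perpendicular to a segment realising the diameter together with the constraint that the inscribed disk of radius $\rho(\Omega)$ must lie inside the tangent cones of $\partial\Omega$ at the two diameter endpoints.
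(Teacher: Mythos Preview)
Your proof is correct and follows essentially the same strategy as the paper. The only cosmetic difference is in Part~(i): the paper cites the Davies-type bound $\|u_\Omega\|_\infty^2\le(e/(2\pi m))^{m/2}\lambda(\Omega)^{m/2}$ (on-diagonal heat kernel at $x=y$) and then feeds it through the Cauchy--Schwarz inequality $E(\Omega)\ge|\Omega|^{-1}\|u_\Omega\|_\infty^{-2}$ of Lemma~\ref{lemA}, whereas you bound the ratio $\|u_\Omega\|_\infty/\|u_\Omega\|_1$ directly via the off-diagonal kernel integrated against $u_\Omega$; both routes arrive at the identical inequality $E(\Omega)|\Omega|\ge(2\pi m/(e\lambda(\Omega)))^{m/2}$. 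Part~(ii) is exactly the paper's argument, with the inequality $|\Omega|\le2\rho(\Omega)\,\textup{diam}(\Omega)$ cited there from \cite{H}.
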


It is straightforward to construct sequences $(\Omega_n)$ for which $(u_{\Omega_n})$ is localising and, as a consequence of Lemma \ref{lem0} and \eqref{e19}, have vanishing efficiency.
For example, let $\Omega_n$ be the disjoint union of one disc $B$ with radius $1$, and $4n$ discs with radii $1/2$.  All of the $L^2$ mass of the first eigenfunction of $\Omega_n$ is supported on $B$, with $|B|/|\Omega_n|=\frac{1}{n+1}$, which tends to $0$ as $n\rightarrow\infty$.

Theorem \ref{the4} below together with Lemmas \ref{lemA} and \ref{lem0}, imply localisation for a wide class of sequences $(u_{\Omega_n})$. We first introduce the necessary notation.
\begin{definition}\label{def2} Points in $\R^m$ will be denoted by a Cartesian pair $(x_1,x')$ with $x_1\in \R,\, x'\in \R^{m-1}$. If $\Omega$ is an open set in $\R^m$, then we define $\Omega(x_1)=\{x'\in \R^{m-1}:(x_1,x')\in \Omega\}$. If $\Omega(x_1)$ is open, bounded, and non-empty in $\R^{m-1}$, then we denote its first $(m-1)$-dimensional Dirichlet eigenvalue by $\mu(\Omega(x_1))$. We also put
$\Omega'=\cup_{x_1\in\R}\Omega(x_1)$. A set $\Omega\subset\R^m$ is horn-shaped if it is open, connected, $x_1>x_2> 0$ implies $\Omega(x_1)\subset \Omega(x_2)$, and $x_1< x_2< 0$ implies $\Omega(x_1)\subset \Omega(x_2)$.
\end{definition}

\begin{theorem}\label{the4} Let $\Omega\subset\R^m$ be horn-shaped with $|\Omega|<\infty$ and $ |\Omega'|_{m-1}<\infty.$ If $\lambda\ge \lambda(\Omega)$,
\begin{equation}\label{e83a}
\mu(\Omega')\ge (m-1)(\lambda-\mu(\Omega')),
\end{equation}
and if
\begin{equation}\label{e83aa}
\varepsilon\in (0,|\Omega|\mu(\Omega')^{m/2}],
\end{equation}
then
\begin{align}\label{e83}
\frac{1}{|\Omega|}\|u_{\Omega}\|_1^2\le 2\varepsilon&+\frac{2|\Omega'|_{m-1}}{|\Omega|}\big|\big\{x_1\in\R:\frac{\mu(\Omega(x_1/2))-\mu(\Omega')}{2(\lambda-\mu(\Omega'))}
\le\log\big(\varepsilon^{-1}|\Omega|\mu(\Omega(x_1/2))^{m/2}\big)\big\}\big|_1\nonumber \\ &+\frac{2^{5/2}|\Omega'|_{m-1}}{|\Omega|}(\lambda-\mu(\Omega'))^{-1/2}\big(\log\big(\varepsilon^{-1}|\Omega|\mu(\Omega')^{m/2}\big)\big)^{1/2}.
\end{align}
\end{theorem}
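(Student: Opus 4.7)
The overall strategy is to decompose $\Omega$ into a ``bulk'' cylindrical region (where $\mu(\Omega(x_1/2))$ is close to $\mu(\Omega')$) and the two ``horns'' (where $\mu(\Omega(x_1/2))$ is substantially larger). Set $A = (B\times \R^{m-1})\cap \Omega$ with $B\subset\R$ the set that appears in \eqref{e83}; then $|A|\le|B|_1|\Omega'|_{m-1}$. Splitting $\|u_\Omega\|_1 = \int_A u_\Omega+\int_{\Omega\setminus A}u_\Omega$, and combining $(a+b)^2\le 2a^2+2b^2$ with Cauchy--Schwarz on each piece (using $\|u_\Omega\|_2=1$), one obtains
\[
 \|u_\Omega\|_1^2\;\le\;2|B|_1|\Omega'|_{m-1}\;+\;2|\Omega|\int_{\Omega\setminus A}u_\Omega^2.
\]
After division by $|\Omega|$, the first summand is the middle term of \eqref{e83}, so the proof reduces to controlling the tail $\int_{\Omega\setminus A}u_\Omega^2$.

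The tail is attacked with Feynman--Kac: for any $T>0$,
\[
 u_\Omega(x) = e^{\lambda T}\,\E_x\bigl[u_\Omega(B_T)\mathbf{1}_{\{\tau_\Omega>T\}}\bigr]\;\le\;e^{\lambda T}\|u_\Omega\|_\infty\,\Pa_x(\tau_\Omega>T).
\]
For $x=(x_1,x')\in\Omega$ with $x_1>0$, the horn condition $\Omega(s)\subseteq\Omega(x_1/2)$ for every $s\ge x_1/2$ means that on the event $\{\inf_{[0,T]}B^{(1)}_s\ge x_1/2\}$ the transverse coordinates $B'$ are constrained to $\Omega(x_1/2)$. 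Independence of the coordinates and the 1D reflection principle therefore yield
\[
 \Pa_x(\tau_\Omega>T)\;\le\;\Pa_{x'}^{(m-1)}\bigl(\tau_{\Omega(x_1/2)}>T\bigr)+e^{-x_1^2/(8T)},
\]
with a symmetric bound for $x_1<0$. The transverse piece is integrated via the spectral inequality $\int_{\Omega(x_1/2)}\Pa_y^{(m-1)}(\tau_{\Omega(x_1/2)}>T)\,dy\le|\Omega(x_1/2)|_{m-1}\,e^{-\mu(\Omega(x_1/2))T}$, which follows from $\sum_j(\int\phi_j)^2=|\Omega(x_1/2)|_{m-1}$. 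The factor $\|u_\Omega\|_\infty^2\le c_m\lambda^{m/2}$ comes from the standard heat-semigroup $L^2\to L^\infty$ estimate, and hypothesis \eqref{e83a}---equivalent to $\lambda\le\mu(\Omega')\,m/(m-1)$---upgrades this to $\|u_\Omega\|_\infty^2\le c_m'\mu(\Omega')^{m/2}$, which matches the $\mu(\Omega')^{m/2}$ that will appear inside the logarithms.

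Choose $T=T(x_1)$ of the order $(\mu(\Omega(x_1/2))-\mu(\Omega'))^{-1}\log(\varepsilon^{-1}|\Omega|\mu(\Omega(x_1/2))^{m/2})$, tuned so that the inequality defining $B$ is precisely the threshold at which the transverse Feynman--Kac decay starts to dominate. On $B^c$ one then verifies that $\|u_\Omega\|_\infty^2\,e^{2\lambda T-\mu(\Omega(x_1/2))T}\le c\,\varepsilon/|\Omega|$ uniformly---this is where the $\mu^{m/2}$ prefactor cancels against the $\mu(\Omega(x_1/2))^{m/2}$ hidden inside $\log M(x_1)$---and integration over $x_1\in B^c$ and over $x'\in\Omega(x_1)$ produces the $2\varepsilon$ contribution. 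The remaining Gaussian tail $e^{-x_1^2/(8T)}$ integrated against $dx_1$ gives $\sqrt{2\pi T}$, of order $(\lambda-\mu(\Omega'))^{-1/2}\bigl(\log(\varepsilon^{-1}|\Omega|\mu(\Omega')^{m/2})\bigr)^{1/2}$; multiplication by $|\Omega'|_{m-1}/|\Omega|$ and the combinatorial constants arising from the Cauchy--Schwarz and $(a+b)^2$ steps yields the third term with its $2^{5/2}$ coefficient.

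The main obstacle is the coordinated choice of $T(x_1)$ so that all three contributions simultaneously land on the three summands of \eqref{e83}. Hypothesis \eqref{e83a} is essential: without it the $L^\infty$ bound produces a $\lambda^{m/2}$ that does not cancel against the $\mu(\Omega(x_1/2))^{m/2}\ge\mu(\Omega')^{m/2}$ inside the logarithm, and the $\varepsilon$ and $(\log M_0)^{1/2}$ thresholds would fail to match. Once that matching is in place, the remaining work is a careful bookkeeping of the Gaussian integrals and a symmetric two-sided treatment of $\Omega$ at $x_1=0$.
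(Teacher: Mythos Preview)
Your geometric splitting $\Omega=A\cup(\Omega\setminus A)$ is not how the paper proceeds, and your Feynman--Kac bound has a quantitative defect that prevents it from yielding the stated inequality.

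The paper first proves the elementary estimate
\[
\frac{1}{|\Omega|}\|u_\Omega\|_1^2\le 2\varepsilon^2|\Omega|+\frac{2}{|\Omega|}\bigl|\{x\in\Omega:u_\Omega(x)>\varepsilon\}\bigr|,
\]
so the task becomes bounding the \emph{measure of a super-level set}, not an $L^2$ tail. For that it uses the diagonal heat kernel directly: from the eigenfunction expansion one has $u_\Omega(x)^2\le e^{\lambda t}p_\Omega(x,x;t)$ for every $t>0$, with no $\|u_\Omega\|_\infty$ appearing. A Brownian \emph{bridge} argument (conditioning on the maximum of the first coordinate) then gives
\[
p_\Omega(x,x;t)\le(4\pi t)^{-1/2}\pi_{\Omega(x_1/2)}(x',x';t)+(4\pi t)^{-1/2}e^{-x_1^2/(4t)}\pi_{\Omega'}(x',x';t),
\]
and each cross-sectional kernel is bounded pointwise by $C\mu^{(m-1)/2}e^{-\mu t}$ (this is where the $\mu(\Omega(x_1/2))^{m/2}$ inside the logarithm arises). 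With a \emph{single fixed} $t=(2(\lambda-\mu(\Omega')))^{-1}$, hypothesis \eqref{e83a} guarantees this $t$ is admissible, the set $\{u_\Omega>\varepsilon\}$ splits as $A_1\cup A_2$, and $|A_1|,|A_2|$ produce the second and third terms after the substitution $\varepsilon\mapsto(\varepsilon/|\Omega|)^{1/2}$.

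Your route $u_\Omega(x)\le e^{\lambda T}\|u_\Omega\|_\infty\,\Pa_x(\tau_\Omega>T)$ is strictly weaker. After squaring and using $\Pa^2\le\Pa$, the transverse contribution carries the exponent $(2\lambda-\mu(\Omega(x_1/2)))T$ rather than $(\lambda-\mu(\Omega(x_1/2)))t$; equivalently you have an extra factor $e^{\lambda T}$ that nothing cancels. With $T$ of order $(\lambda-\mu(\Omega'))^{-1}$, this factor is $\exp\bigl(\lambda/(2(\lambda-\mu(\Omega')))\bigr)$, which is unbounded as $\lambda\downarrow\mu(\Omega')$, so the claimed uniform bound $\|u_\Omega\|_\infty^2e^{2\lambda T-\mu T}\le c\varepsilon/|\Omega|$ on $B^c$ fails. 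The set $B$ in \eqref{e83} is exactly tuned to the exponent $\lambda-\mu$, not $2\lambda-\mu$, so your decomposition does not match it. Separately, letting $T=T(x_1)$ vary makes the Gaussian integral $\int e^{-x_1^2/(8T(x_1))}\,dx_1$ ill-defined as ``$\sqrt{2\pi T}$''; the paper keeps $t$ fixed and instead measures the set where the Gaussian term exceeds the threshold, which is what produces the $(\log)^{1/2}$ factor in the third summand.
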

If $\Omega\subset \R^2$ is open, bounded and convex, then it is always possible to find an isometry of $\Om$ such that this isometric set is horn-shaped: let $p$ and $q$ be points on $\partial\Omega$ such that $|p-q|=w(\Omega)$,
and $p-q$ is perpendicular to the pair of straight parallel lines tangent to $\partial\Omega$ at both $p$ and $q$ which define the width $w(\Omega)$. That such a pair $p,q$ exists was shown for example in Theorem 1.5
in \cite{MvdB3}. Let $T_{p,q}(\Om)=\{x-\frac12(p+q):x\in \Om\}$ be the translation of $\Om$ which translates the midpoint of $p$ and $q$ to the origin. Let $\varphi$ be the angle between the positive $x_1$ axis and the unit
 vector $(p-q)/|p-q|$, and let $R_{\varphi}$ be rotation over an angle $\frac{\pi}{2}-\varphi$. Then $R_{\varphi}T_{p,q}(\Om)$ is isometric with $\Om$, horn-shaped,
\begin{equation*}
R_{\varphi}T_{p,q}(\Om)'=(-|p-q|/2,|p-q|/2),
\end{equation*}
and
\begin{equation}\label{ea2}
|R_{\varphi}T_{p,q}(\Om)'|_1=w(\Om).
\end{equation}
The points $p$ and $q$ need not be unique, and so this isometry need not be unique. However, the construction above always gives \eqref{ea2}. If $\Upsilon$ is an ellipse with semi axes $a_1$ and $a_2$ with $a_1> a_2$ then
$R_{\varphi}T_{p,q}(\Upsilon)=\{(x_1,x_2):\big(\frac{x_1}{a_1}\big)^2+\big(\frac{x_2}{a_2}\big)^2<1\}$, and $|R_{\varphi}T_{p,q}(\Upsilon)'|_1=w(\Upsilon).$ However, the ellipse $\tilde{\Upsilon}=\{(x_1,x_2):\big(\frac{x_1}{a_2}\big)^2+\big(\frac{x_2}{a_1}\big)^2<1\}$ is a horn-shaped isometry of $\Upsilon$ with $|\tilde{\Upsilon}'|_1>w(\Upsilon)$.

\begin{corollary}\label{cor1}
Let $\Omega\subset \R^2$ be a convex horn-shaped set. If $\lambda\ge \lambda(\Omega)$ and $\mu(\Omega')\ge \frac12\lambda,$
then for $\varepsilon\in (0,|\Omega|\mu(\Omega'))$,
\begin{align}\label{e114}
\frac{1}{|\Omega|}\|u_{\Omega}\|_1^2\le 2\varepsilon &+\frac{2|\Omega'|_{1}}{|\Omega|}\big|\big\{x_1\in\R:\frac{|\Omega'|^2_{1}\mu(\Omega(x_1/2))-\pi^2}{2(|\Omega'|^2_{1}\lambda-\pi^2)}
\le\log\big(4\pi^2\varepsilon^{-1}|\Omega'|^{-2}_{1}|\Omega|\big)\big\}\big|_1\nonumber \\ &+\frac{2^{5/2}|\Omega'|_1^2}{|\Omega|}(|\Omega'|^2_{1}\lambda-\pi^2)^{-1/2}\big(\log\big(\pi^2\varepsilon^{-1}|\Omega'|^{-2}_{1}|\Omega|\big)\big)^{1/2}.
\end{align}
\end{corollary}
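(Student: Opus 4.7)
The plan is to derive Corollary \ref{cor1} as a direct specialisation of Theorem \ref{the4} to $m=2$, using the explicit one-dimensional Dirichlet eigenvalue of an interval, $\mu(I)=\pi^{2}/|I|_{1}^{2}$, together with a convexity estimate on the transversal cross-sections.

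First I would verify the hypotheses. Setting $m=2$ in \eqref{e83a} gives $\mu(\Omega')\ge \lambda-\mu(\Omega')$, i.e.\ $\mu(\Omega')\ge \lambda/2$, which is exactly the hypothesis of the corollary. The range \eqref{e83aa} becomes $\varepsilon\in(0,|\Omega|\mu(\Omega')]$, matching the corollary's $(0,|\Omega|\mu(\Omega'))$ up to the endpoint. Since $\Omega\subset\R^{2}$ is convex, every nonempty section $\Omega(x_{1})\subset\R$ is an interval, and hence
\[
\mu(\Omega(x_{1}))=\frac{\pi^{2}}{|\Omega(x_{1})|_{1}^{2}},\qquad \mu(\Omega')=\frac{\pi^{2}}{|\Omega'|_{1}^{2}}.
\]

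Next I would perform the algebraic substitutions in \eqref{e83}. Multiplying the numerator and denominator of $\frac{\mu(\Omega(x_{1}/2))-\mu(\Omega')}{2(\lambda-\mu(\Omega'))}$ by $|\Omega'|_{1}^{2}$ and using $|\Omega'|_{1}^{2}\mu(\Omega')=\pi^{2}$ converts this ratio into the expression appearing in \eqref{e114}. For the third term of \eqref{e83}, the identities $(\lambda-\mu(\Omega'))^{-1/2}=|\Omega'|_{1}(|\Omega'|_{1}^{2}\lambda-\pi^{2})^{-1/2}$ and $\log(\varepsilon^{-1}|\Omega|\mu(\Omega'))=\log(\pi^{2}\varepsilon^{-1}|\Omega'|_{1}^{-2}|\Omega|)$ do the job; the extra factor of $|\Omega'|_{1}$ produced in the rewriting is absorbed into the prefactor, promoting $|\Omega'|_{1}$ to $|\Omega'|_{1}^{2}$ out front, matching \eqref{e114}.

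The main obstacle is the middle term, where the $x_{1}$-dependent logarithm $\log(\varepsilon^{-1}|\Omega|\mu(\Omega(x_{1}/2)))$ in \eqref{e83} must be replaced by the constant $\log(4\pi^{2}\varepsilon^{-1}|\Omega'|_{1}^{-2}|\Omega|)=\log(4\varepsilon^{-1}|\Omega|\mu(\Omega'))$. This enlarges the set (the safe direction) provided
\[
\mu(\Omega(x_{1}/2))\le 4\mu(\Omega'),\qquad\text{equivalently,}\qquad |\Omega(x_{1}/2)|_{1}\ge \tfrac{1}{2}|\Omega'|_{1}.
\]
This is where convexity and the horn-shape hypothesis enter decisively. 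By the one-dimensional Brunn-Minkowski principle, the function $x_{1}\mapsto|\Omega(x_{1})|_{1}$ is concave on its support $P$, and the horn-shape property forces its maximum $|\Omega'|_{1}$ to be attained at $x_{1}=0$ with $\Omega(0)=\Omega'$. Midpoint concavity therefore gives
\[
|\Omega(x_{1}/2)|_{1}\ge \tfrac{1}{2}|\Omega(0)|_{1}+\tfrac{1}{2}|\Omega(x_{1})|_{1}\ge \tfrac{1}{2}|\Omega'|_{1},
\]
which accounts for the factor $4$ inside the logarithm in \eqref{e114} and, combined with the algebraic substitutions above, completes the reduction from \eqref{e83} to \eqref{e114}.
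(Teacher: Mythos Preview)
Your proof is correct and follows essentially the same approach as the paper: specialise Theorem~\ref{the4} to $m=2$, use $\mu(\Omega')=\pi^2/|\Omega'|_1^2$, and replace the $x_1$-dependent logarithm by the constant one via the bound $\mu(\Omega(x_1/2))\le 4\mu(\Omega')$. The only cosmetic difference is in how that bound is justified: the paper observes that $\Omega$ contains the triangle with base $\Omega'$ and apex at the extreme point $x_\Omega^{\pm}$, whose section at $x_1/2$ already has length $\ge\tfrac12|\Omega'|_1$, while you obtain the same inequality from midpoint concavity of $x_1\mapsto|\Omega(x_1)|_1$; these are two phrasings of the same convexity fact (and your assertion $\Omega(0)=\Omega'$ is slightly stronger than needed---only $|\Omega(0)|_1=|\Omega'|_1$ is used, and that does hold).
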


\begin{example}\label{exa1}
If $(a_n),(b_n),\, n\in \N$ are sequences in $\R$ satisfying $a_n\in[0,1],b_n\in [0,n]$, and if $\Omega_n$ is the quadrilateral with vertices $(0,a_n),(0,-1+a_n),(b_n,0),(-n+b_n,0),$ then
\begin{equation}\label{e85}
\frac{1}{|\Omega_n|}\|u_{\Omega_n}\|_1^2=O\big(n^{-2/3}\log n\big),\, n\rightarrow\infty,
\end{equation}
and $(u_{\Omega_n})$ is localising.
\end{example}
\begin{example}\label{exa2}
Let $R_n\subset \R^2$ be the rhombus with vertices $(\frac{n}{2},0),(-\frac{n}{2},0),(0,\frac12),(0,-\frac12)$, and let $\Omega_n$ be an open subset of $R_n$ which contains the open triangle $T_n$ with vertices
$(\frac{n}{2},0),(0,\frac12),(0,-\frac12).$ Then $\Omega_n$ satisfies \eqref{e85}, and $(u_{\Omega_n})$ is localising.
\end{example}

It follows by scaling properties of both $u_{\Omega}$ and $|\Omega|$ that if
$\Omega$ is open and connected with $|\Omega|<\infty$, and if
$\alpha>0$, then
\begin{equation*}
E(\alpha\Omega)=E(\Omega),
\end{equation*}
where $\alpha\Omega$ is a homothety of $\Omega$ by a factor
$\alpha$. Similarly,
\begin{equation*}
\frac{1}{|\alpha\Omega_n|}\|u_{\alpha\Omega_n}\|_1^2=\frac{1}{|\Omega_n|}\|u_{\Omega_n}\|_1^2.
\end{equation*}
Example \ref{exa2} then implies that a sequence of suitable translations, rotations and hometheties of sectors $(S_n(r)),$ with
\begin{equation*}
S_n(r):=\left\{(\rho,\theta):0<\rho<r, 0<\theta<\pi/n\right\}
\end{equation*}
satisfies
\begin{equation*}
\frac{1}{|S_n(r)|}\|u_{S_n(r)}\|_1^2=O\big(n^{-2/3}\log n\big),\, n\rightarrow\infty,
\end{equation*}
and $(u_{S_n(r)})$ localises as $n\rightarrow\infty$.  This could have been obtained directly using separation of variables, Kapteyn's inequality, and extensive computations involving Bessel functions. See \cite{kro} for similar computations.

\begin{example}\label{exa3}
If $1\le\alpha<\infty$, $m=2,3,...$, and
\begin{equation*}
\Omega_{n,\alpha}=\big\{(x_1,x')\in \R^m:\big(2n^{-1}|x_1|\big)^{\alpha}+|x'|^{\alpha}<1\big\},\, n\in \N,
\end{equation*}
then
\begin{equation}\label{e85b}
\frac{1}{|\Omega_{n,\alpha}|}\|u_{\Omega_{n,\alpha}}\|_1^2=O\big(n^{-2/(\alpha+2)}(\log n)^{\max\{1/\alpha,1/2\}}\big),\, n\rightarrow\infty,
\end{equation}
and $(u_{\Omega_{n,\alpha}})$ is localising.
\end{example}

\begin{theorem}\label{the5}
If $R>0,\,\eps>0$, and
\begin{equation*}
\Omega_{R,R+\eps}=\{x\in \R^{m}\colon R<|x|<R+\eps\},
\end{equation*} then
\begin{equation}\label{e22}
\lim_{\eps\downarrow 0}\eps^{2}\lambda(\Omega_{R,R+\eps})=\pi^{2},
\end{equation}
and
\begin{equation}\label{e23}
\lim_{\eps\downarrow 0}E(\Omega_{R,R+\eps})=\frac{2}{\pi}.
\end{equation}
If $\triangle\subset \R^2$ is an equilateral triangle, then
\begin{equation}\label{e24}
E(\triangle)=\frac{2}{\pi\sqrt 3}.
\end{equation}
If $\square\subset \R^2$ is a rectangle, then
\begin{equation}\label{e25}
E(\square)=\frac{4}{\pi^2}.
\end{equation}
If $B\subset\R^2$ is a disc, then
\begin{equation}\label{e26}
E(B)\approx 0.6782\frac{2}{\pi}.
\end{equation}
\end{theorem}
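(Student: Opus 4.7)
The four assertions are essentially four independent computations, so I would organise the proof by doing the closed-form cases first and leaving the annulus and the triangle for last.

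\emph{Rectangle and disc.} Both reduce to direct evaluation on an explicit ground state. For $\square=(0,a)\times(0,b)$, separation of variables gives $u_{\square}(x,y)=\sin(\pi x/a)\sin(\pi y/b)$, so $\|u_\square\|_\infty=1$, $\|u_\square\|_1=(2a/\pi)(2b/\pi)$, $|\square|=ab$, and the ratio equals $4/\pi^2$ independently of the aspect ratio. For a disc $B$ of radius $R$, the rotational symmetry of $B$ together with the simplicity of $\lb(B)$ forces $u_B$ to be radial, so $u_B(r)=J_0(j_0 r/R)$ and $\|u_B\|_\infty=J_0(0)=1$. Bessel's identity $\int_0^{j_0} s J_0(s)\,ds = j_0 J_1(j_0)$ yields $\|u_B\|_1=2\pi R^2 J_1(j_0)/j_0$, hence $E(B)=2J_1(j_0)/j_0$; the numerical values $j_0\approx 2.4048$, $J_1(j_0)\approx 0.5191$ produce $E(B)\approx 0.4317\approx 0.6782\cdot(2/\pi)$.

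\emph{Thin annulus.} The same simplicity-plus-symmetry argument makes $u_{\Om_{R,R+\eps}}$ radial, $u(x)=f(|x|)$, with $f>0$ solving
\begin{equation*}
-f''-\tfrac{m-1}{r}f'=\lb f\ \text{ on }\ (R,R+\eps),\qquad f(R)=f(R+\eps)=0.
\end{equation*}
The Liouville substitution $g(r)=r^{(m-1)/2}f(r)$ turns this into $-g''+V(r)g=\lb g$ on $(R,R+\eps)$ with $V(r)=\tfrac{(m-1)(m-3)}{4r^2}$ and Dirichlet data at both endpoints. Because $V$ is uniformly bounded on $[R,R+\eps]$ by a constant depending only on $R$, min--max comparison with $-d^2/dr^2$ on $(R,R+\eps)$ (whose first eigenvalue is $(\pi/\eps)^2$) gives $\lb = (\pi/\eps)^2+O(1)$, proving \eqref{e22}. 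For \eqref{e23}, rescale $r=R+\eps s$ and normalise $\|g\|_\infty=1$; standard Sturm--Liouville perturbation then forces $g(R+\eps\,\cdot)\to\sin(\pi\,\cdot)$ uniformly on $[0,1]$. Reading off the leading-order asymptotics of $\|u\|_\infty$, $\|u\|_1$ and $|\Om_{R,R+\eps}|$ from $f(r)=r^{-(m-1)/2}g(r)$, a short computation shows the ratio tends to $2/\pi$.

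\emph{Equilateral triangle.} This is the hardest piece, since the ground state is not a single trigonometric product but a Lamé-type linear combination of three sines, one associated with each reflection axis of $\triangle$. Placing $\triangle$ so that its centroid sits at the origin and writing $u_\triangle$ in its explicit Lamé form, the $D_3$ dihedral symmetry forces $\|u_\triangle\|_\infty$ to be attained at the centroid, where it is evaluated by an elementary sine identity. The integral $\|u_\triangle\|_1=\int_\triangle u_\triangle$ then reduces, by the same symmetry, to six copies of an integral over a $30$--$60$--$90$ fundamental sub-triangle; each sine term there contributes a closed form, the transcendental parts collapse, and dividing by $|\triangle|\,\|u_\triangle\|_\infty$ leaves exactly $2/(\pi\sqrt 3)$. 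The main obstacle across the whole theorem sits here: one must pin down the Lamé normalisation, verify from the explicit formula that the supremum is actually attained at the centroid and not at an interior critical point of some side-adapted component, and then push through the trigonometric integrals over the fundamental sub-triangle. The annulus asymptotics, by contrast, are routine once the Liouville reduction is in place, and the rectangle and disc are essentially one-line verifications.
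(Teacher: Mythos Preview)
Your proposal is correct, and for the rectangle, disc, and equilateral triangle it follows essentially the same lines as the paper: explicit eigenfunction, direct evaluation of $\|u\|_1$, $\|u\|_\infty$, $|\Omega|$. (Your Bessel identity $\int_0^{j_0} sJ_0(s)\,ds=j_0J_1(j_0)$ gives the closed form $E(B)=2J_1(j_0)/j_0$, which the paper leaves as a numerical integral; and you are overestimating the triangle: the paper simply quotes the explicit three-sine formula from \cite{S}, evaluates it at the centroid $(1/2,\sqrt3/6)$ to get $\|u_\triangle\|_\infty=3\sqrt3/2$, and integrates over $\triangle$ directly to get $\|u_\triangle\|_1=9/(4\pi\sqrt3)$, without any decomposition into fundamental sub-triangles.)

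The genuine methodological difference is in the annulus. You perform the Liouville substitution $g=r^{(m-1)/2}f$, reducing to a Schr\"odinger operator on $(R,R+\eps)$ with bounded potential $V(r)=(m-1)(m-3)/(4r^2)$, and then appeal to min--max and Sturm--Liouville perturbation. The paper instead works directly with the weighted radial Rayleigh quotient: it gets the upper bound for $\lambda$ by testing with $\sin(\pi(|x|-R)/\eps)$ and the lower bound by pulling the weight $r^{m-1}$ out of numerator and denominator as $(R/(R+\eps))^{m-1}$, reducing to the flat $1$D problem; for the efficiency it rescales to $[0,1]$, derives an a priori $C^1$ bound on $\phi_\eps$ straight from the ODE, and invokes Arzel\`a--Ascoli to pass to the limit $\phi''+\pi^2\phi=0$. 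Your route is cleaner conceptually (the bounded potential makes the $O(1)$ error transparent), while the paper's is more self-contained and avoids any appeal to perturbation theory, giving explicit two-sided bounds $(\pi^2/\eps^2)(R/(R+\eps))^{\pm(m-1)}$ at every step.
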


Inequalities (6.9) in \cite{grebenkov}, and (4.7) in \cite{kuttler} state that for $\Omega$ open, bounded, planar, and convex,
\begin{equation}\label{e6}
u_{\Omega}(x)\le \min\{|x-y|:y\in\partial\Omega\}\frac{\lambda(\Omega)^{1/2}}{|\Omega|}\| u_{\Omega}\|_1,
\end{equation}
and both papers refer to \cite{ps} for details. However, no such inequality can be found in \cite{ps}. Inequality \eqref{e6} would, by first maximising its right-hand side over all $x\in\Omega$, and subsequently its left-hand side
over all $x\in \Omega$, imply that
\begin{equation}\label{e7}
\| u_{\Omega}\|_{\infty}\le \rho(\Omega)\frac{\lambda(\Omega)^{1/2}}{|\Omega|}\| u_{\Omega}\|_1.
\end{equation}
Since the Dirichlet eigenvalues are monotone in the domain, and $\Omega$ contains a disc of radius $\rho(\Omega)$,
\begin{equation*}
\lambda(\Omega)\le \frac{j_0^2}{\rho(\Omega)^2}.
\end{equation*}
This, by \eqref{e4} and \eqref{e7}, implies that for a bounded, planar convex set $\Omega$,
\begin{equation}\label{e9}
E(\Omega)\ge j_0^{-1}.
\end{equation}
Inequality \eqref{e7} was also quoted in formula (2.24) in \cite{gamara}. However, \eqref{e7} and \eqref{e9} cannot hold true. Example \ref{exa1} above implies that $\lim_{n\rightarrow \infty}E(\Omega_n)=0$ for a collection of sequences of convex quadrilaterals $(\Omega_n)$. This collection includes a sequence of rhombi with vertices $(n/2,0),(-n/2,0),(0,1/2),(0,-1/2)$. This contradicts \eqref{e9}.

This paper is organised as follows. The proofs of Lemma \ref{lem0}, and Theorem \ref{the3} are deferred to Section \ref{sec2} below. The proofs of Theorem \ref{the4}, Corollary \ref{cor1}, and Examples \ref{exa1}, \ref{exa2}, and \ref{exa3} will be given in Section \ref{sec3}. The proof of Theorem \ref{the5} will be given in Section \ref{sec4}.

\section{Proofs of Lemma \ref{lem0} and Theorem \ref{the3}\label{sec2}}

\noindent{\it Proof of Lemma \textup{\ref{lem0}.}}
First suppose \eqref{e13} holds. That is if
\begin{equation}\label{e65}
a_n=\frac{1}{|\Omega_n|}\|f_n\|_1^2,
\end{equation}
then
\begin{equation}\label{e65a}
\lim_{n\rightarrow\infty}a_n=0.
\end{equation}
Let $\alpha>0$, and define
\begin{equation*}
B_{n,\alpha}=\{x\in \Omega_n:f_{n}(x)>\alpha\}.
\end{equation*}
It follows that
\begin{equation*}
\int_{\Omega_n\setminus B_{n,\alpha}}f_{n}^2\le \alpha^2|\Omega_n\setminus B_{n,\alpha}|,
\end{equation*}
and
\begin{equation*}
\int_{ B_{n,\alpha}}f_{n}^2\ge 1- \alpha^2|\Omega_n\setminus B_{n,\alpha}|\ge 1- \alpha^2|\Omega_n| .
\end{equation*}
Furthermore,
\begin{equation}\label{e69}
\int_{B_{n,\alpha}}f_{n}\ge \alpha|B_{n,\alpha}|.
\end{equation}
It follows by \eqref{e65} and \eqref{e69} that
\begin{equation*}
|B_{n,\alpha}|\le \alpha^{-1}\int_{B_{n,\alpha}}f_{n}\le \alpha^{-1}\int_{\Omega_n}f_{n}\le \alpha^{-1}a_n^{1/2}|\Omega_n|^{1/2}.
\end{equation*}
We now choose
\begin{equation*}
\alpha=a_n^{1/4}|\Omega_n|^{-1/2},
\end{equation*}
and conclude that,
\begin{equation*}
\int_{B_{n,a_n^{1/4}|\Omega_n|^{-1/2}}}f_{n}^2\ge 1- a_n^{1/2},\, \frac{|B_{n,a_n^{1/4}|\Omega_n|^{-1/2}}|}{|\Omega_n|}\le a_n^{1/4}.
\end{equation*}
Set $A_n=B_{n,a_n^{1/4}|\Omega_n|^{-1/2}}$. Then $A_n$ satisfies \eqref{e12} by \eqref{e65a}.

Next suppose \eqref{e12} holds. Let $\varepsilon\in(0,1)$ be arbitrary. There exists $N_{\varepsilon}\in \N$ such that both
\begin{equation*}
\int_{\Omega_n\setminus A_n}f_n^2<\varepsilon,
\end{equation*}
and $|A_n|/|\Omega_n|<\varepsilon$. So for $n\ge N_{\varepsilon}$,
\begin{align*}
\frac{1}{|\Omega_n|}\|f_n\|_1^2&
=\frac{1}{|\Omega_n|}\bigg(\int_{A_{n}}f_{n}+\int_{\Omega_n\setminus A_{n}}f_{n}\bigg)^2\nonumber \\ &
\le \frac{2}{|\Omega_n|}\bigg(\bigg(\int_{A_{n}}f_{n}\bigg)^2+\bigg(\int_{\Omega_n\setminus A_{n}}f_{n}\bigg)^2\bigg)\nonumber \\ &
\le \frac{2}{|\Omega_n|}\bigg(|A_{n}|+|\Omega_n\setminus A_{n}|\int_{\Omega_n\setminus A_{n}}f_{n}^2\bigg)\nonumber \\ &
\le 2\bigg(\frac{|A_{n}|}{|\Omega_n|}+\varepsilon\bigg)\nonumber \\ &\le 4\varepsilon.
\end{align*}
This concludes the proof since $\varepsilon\in(0,1)$ was arbitrary.
\hspace*{\fill }$\square $

\vspace{5mm}

\noindent{\it Proof of Theorem \textup{\ref{the3}}.}
By Lemma 3.1 in \cite{EBD} we have, taking into account that the estimates there are for the Dirichlet Laplacian with an extra factor $\frac12$, that
\begin{equation}\label{e73}
\|u_{\Omega}\|_{\infty}^2\le \left(\frac{e}{2\pi m}\right)^{m/2}\lambda(\Omega)^{m/2}.
\end{equation}
Since $\Omega$ contains a ball with inradius $\rho(\Omega)$, we have by domain monotonicity
\begin{equation}\label{e74}
\lambda(\Omega)\le \frac{j_{(m-2)/2}^2}{\rho(\Omega)^2}.
\end{equation}
By \eqref{e73}, and \eqref{e74},
\begin{equation*}
\|u_{\Omega}\|_{\infty}^{-2}\ge\bigg(\frac{2\pi m}{ej^2_{(m-2)/2}}\bigg)^{m/2}{\rho(\Omega)^{m}},
\end{equation*}
and \eqref{e14} follows by \eqref{e18}.
By \cite{H} we have that for planar convex sets, $|\Omega|\le 2\, \textup{diam}(\Omega)\rho(\Omega)$.
This, together with \eqref{e14}, implies \eqref{e15}.
\hspace*{\fill }$\square $

\section{Proofs of Theorem \ref{the4}, Corollary \ref{cor1}, and Examples \ref{exa1}, \ref{exa2}, \ref{exa3} \label{sec3}}

To prove Theorem \ref{the4} we proceed via a number of lemmas.
\begin{lemma}\label{lem1} If $\Omega$ is an open set with $|\Omega|<\infty$ and if $\Vert u_{\Omega}\Vert_2=1$, then for any $\varepsilon>0$,
\begin{equation}\label{e79}
\frac{1}{|\Omega|}\|u_{\Omega}\|_1^2\le 2\varepsilon^2|\Omega|+\frac{2}{|\Omega|}|\{x\in \Omega:u_{\Omega}(x)>\varepsilon\}|.
\end{equation}
\end{lemma}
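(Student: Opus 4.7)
\noindent\textit{Proof plan for Lemma \ref{lem1}.}
The plan is a direct splitting of the $L^1$ integral of $u_{\Omega}$ across the level set of height $\varepsilon$, followed by Cauchy--Schwarz on the high-value part and the trivial $L^\infty$ bound on the low-value part.

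First I would set $A_{\varepsilon}=\{x\in\Omega:u_{\Omega}(x)>\varepsilon\}$ and write
\begin{equation*}
\|u_{\Omega}\|_1=\int_{A_{\varepsilon}}u_{\Omega}+\int_{\Omega\setminus A_{\varepsilon}}u_{\Omega}.
\end{equation*}
On $\Omega\setminus A_{\varepsilon}$ the integrand is at most $\varepsilon$, giving $\int_{\Omega\setminus A_{\varepsilon}}u_{\Omega}\le \varepsilon|\Omega|$. On $A_{\varepsilon}$ I would apply Cauchy--Schwarz together with the normalisation $\|u_{\Omega}\|_2=1$ to obtain
\begin{equation*}
\int_{A_{\varepsilon}}u_{\Omega}\le |A_{\varepsilon}|^{1/2}\Big(\int_{A_{\varepsilon}}u_{\Omega}^2\Big)^{1/2}\le |A_{\varepsilon}|^{1/2}.
\end{equation*}
Combining these two bounds and squaring (using $(a+b)^2\le 2a^2+2b^2$) yields
\begin{equation*}
\|u_{\Omega}\|_1^2\le 2|A_{\varepsilon}|+2\varepsilon^2|\Omega|^2,
\end{equation*}
and dividing through by $|\Omega|$ gives exactly \eqref{e79}.

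There is essentially no obstacle: the argument uses only Cauchy--Schwarz, the $L^\infty$ bound on the sublevel set, and the elementary inequality $(a+b)^2\le 2(a^2+b^2)$. The only ``choice'' is to distribute the factor of $2$ so that the final bound has the desired form; nothing more delicate than that is needed. Note in particular that positivity of $u_{\Omega}$ is used so that $\int u_{\Omega}=\|u_{\Omega}\|_1$, but no further properties of the first eigenfunction (e.g.\ the PDE it satisfies) enter the proof.
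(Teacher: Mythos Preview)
Your proof is correct and essentially identical to the paper's: both split $\|u_\Omega\|_1$ across the level set $\{u_\Omega>\varepsilon\}$, apply the trivial bound $u_\Omega\le\varepsilon$ on the sublevel set, Cauchy--Schwarz (with $\|u_\Omega\|_2=1$) on the superlevel set, and then square using $(a+b)^2\le 2a^2+2b^2$. The only cosmetic difference is that the paper names the sublevel set $\Omega^\varepsilon$ while you name its complement $A_\varepsilon$.
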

\begin{proof}
Let
\begin{equation*}
\Omega^{\varepsilon}=\{x\in \Omega:u_{\Omega}\le\varepsilon\}.
\end{equation*}
We have by Cauchy-Schwarz that
\begin{align*}
\frac{1}{|\Omega|}\|u_{\Omega}\|_1^2&
=\frac{1}{|\Omega|}\bigg(\int_{\Omega^{\varepsilon}}u_{\Omega}+\int_{\Omega\setminus\Omega^{\varepsilon}}u_{\Omega}\bigg)^2\nonumber \\ &\le
\frac{2}{|\Omega|}\bigg(\bigg(\int_{\Omega^{\varepsilon}}u_{\Omega}\bigg)^2+\bigg(\int_{\Omega\setminus\Omega^{\varepsilon}}u_{\Omega}\bigg)^2\bigg)\nonumber\\ &
\le \frac{2}{|\Omega|}\bigg(\varepsilon^2|\Omega^{\varepsilon}|^2+|\Omega\setminus\Omega^{\varepsilon}|\int_{\Omega\setminus\Omega^{\varepsilon}}u_{\Omega}^2\bigg)\nonumber \\ &
\le2\varepsilon^2|\Omega|+\frac{2}{|\Omega|}|\{x\in \Omega:u_{\Omega}(x)>\varepsilon\}|.
\end{align*}
\end{proof}

For a non-empty open set $\Omega\subset \R^m$, we denote by $p_{\Omega}(x,y;t),\, x\in \Omega,y\in \Omega,\,t>0$ its Dirichlet heat kernel.
\begin{lemma}\label{lem2} If $\Omega$ is an open set in $\R^m$ with $0<|\Omega|<\infty$, then
\begin{equation}\label{e81}
p_{\Omega}(x,x;t)\le \bigg(\frac{e}{2\pi m}\bigg)^{m/2}\lambda(\Omega)^{m/2}e^{-t\lambda(\Omega)},\, t\ge \frac{m}{2\lambda(\Omega)}.
\end{equation}
\end{lemma}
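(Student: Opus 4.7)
The plan is to interpolate between the heat semigroup bound coming from the first eigenvalue and the free-space Gaussian on-diagonal bound, then optimise the split.

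First I would write the on-diagonal heat kernel via the eigenfunction expansion
\begin{equation*}
p_{\Omega}(x,x;t)=\sum_{j\in\N}u_{j,\Omega}(x)^{2}\,e^{-t\lambda_{j}(\Omega)}.
\end{equation*}
For any $0<s\le t$, bounding $e^{-(t-s)\lambda_{j}(\Omega)}\le e^{-(t-s)\lambda(\Omega)}$ termwise gives
\begin{equation*}
p_{\Omega}(x,x;t)\le e^{-(t-s)\lambda(\Omega)}\,p_{\Omega}(x,x;s).
\end{equation*}

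Next I would invoke the standard Gaussian on-diagonal bound obtained by dominating the Dirichlet heat kernel by the free heat kernel,
\begin{equation*}
p_{\Omega}(x,x;s)\le (4\pi s)^{-m/2},
\end{equation*}
which yields
\begin{equation*}
p_{\Omega}(x,x;t)\le (4\pi s)^{-m/2}e^{-(t-s)\lambda(\Omega)}\qquad (0<s\le t).
\end{equation*}

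Finally I would optimise the right-hand side in $s$. Differentiating $s\mapsto s^{-m/2}e^{s\lambda(\Omega)}$ shows that the minimum over $s>0$ is attained at $s^{*}=m/(2\lambda(\Omega))$, and by hypothesis $t\ge s^{*}$, so this choice is admissible. Substituting $s=s^{*}$ gives
\begin{equation*}
p_{\Omega}(x,x;t)\le\bigg(\frac{2\lambda(\Omega)}{4\pi m}\bigg)^{m/2}e^{m/2}\,e^{-t\lambda(\Omega)}=\bigg(\frac{e}{2\pi m}\bigg)^{m/2}\lambda(\Omega)^{m/2}e^{-t\lambda(\Omega)},
\end{equation*}
which is \eqref{e81}. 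The only real step of substance is the interpolation inequality between $p_{\Omega}(x,x;t)$ and $p_{\Omega}(x,x;s)$; everything else is an optimisation in one variable and a direct appeal to the free Gaussian bound, so no serious obstacle is expected.
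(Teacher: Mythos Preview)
Your proof is correct and is essentially identical to the paper's argument: the paper parametrises the split by $\alpha\in[0,1)$ and writes $p_{\Omega}(x,x;t)\le e^{-\alpha t\lambda(\Omega)}p_{\Omega}(x,x;(1-\alpha)t)\le e^{-\alpha t\lambda(\Omega)}(4\pi(1-\alpha)t)^{-m/2}$, then optimises to $\alpha=1-\frac{m}{2t\lambda(\Omega)}$, which under the substitution $s=(1-\alpha)t$ is exactly your interpolation and your optimal $s^{*}=m/(2\lambda(\Omega))$.
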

\begin{proof}
Since $|\Omega|<\infty$, $p_{\Omega}(x,y;t)$ has an $L^2(\Omega)$ eigenfunction expansion given by
\begin{align}\label{e82}
\sum_{j=1}^{\infty}e^{-t\lambda_j(\Omega)}u_{j,\Omega}^2(x)=p_{\Omega}(x,x;t).
\end{align}
It follows from \eqref{e82} that for $\alpha\in [0,1)$,
\begin{align}\label{e86}
p_{\Omega}(x,x;t)&=\sum_{j=1}^{\infty}e^{-(\alpha+1-\alpha) t\lambda_j(\Omega)}u_{j,\Omega}^2(x)\nonumber \\ &\le e^{-\alpha t\lambda(\Omega)}\sum_{j=1}^{\infty}e^{-(1-\alpha) t\lambda_j(\Omega)}u_{j,\Omega}^2(x)\nonumber \\ &=e^{-\alpha t\lambda(\Omega)}p_{\Omega}(x,x;(1-\alpha)t)\nonumber \\ &\le
e^{-\alpha t\lambda(\Omega)}p_{\R^m}(x,x;(1-\alpha)t)\nonumber \\ &= e^{-\alpha t\lambda(\Omega)}(4\pi(1-\alpha)t)^{-m/2},
\end{align}
where we have used monotonicity of the Dirichlet heat kernel.
For $t\ge m/(2\lambda(\Omega))$ we choose  $\alpha$ as to optimise the right-hand side of \eqref{e86}. This yields,
\begin{equation*}
\alpha=1-\frac{m}{2t\lambda(\Omega)},
\end{equation*}
which in turn gives \eqref{e81}.
\end{proof}

The main idea in the proof of Theorem \ref{the4} is to use Brownian motion techniques to achieve an efficient way of separation
of variables for horn-shaped domains. These have been used extensively elsewhere. See for example \cite{BaMvdB}, and Lemma 7 in \cite{MvdB0}. If $\Omega(x_1)$ is open and non-empty then, following Definition \ref{def2}, we denote  corresponding Dirichlet heat kernel by $\pi_{\Omega(x_1)}(x',y';t),\, x'\in \Omega(x_1),\,y'\in \Omega(x_1),\, t>0$. We also put $\mu(\emptyset)=\infty, \pi_{\emptyset}(x',y';t)=0$.

\begin{lemma}\label{lem3}
Let $\Omega$ be a horn-shaped set in $\R^m$. If $x_1\in\R,x'\in \Omega(x_1)$, then
\begin{equation}\label{e88}
p_{\Omega}(x,x;t)\le (4\pi t)^{-1/2}\pi_{\Omega(x_1/2)}(x',x';t)+(4\pi t)^{-1/2}e^{-x_1^2/(4t)}\pi_{\Omega'}(x',x';t).
\end{equation}
\end{lemma}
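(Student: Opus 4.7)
The plan is a probabilistic decomposition based on the Feynman--Kac representation and independence of the coordinates of Brownian motion. Let $B_s = (B^{(1)}_s, B'_s)$ be standard Brownian motion in $\R^m$ started from $x=(x_1,x')$, and recall the identity $p_\Omega(x,y;t)\,dy = \Pa_x(B_s \in \Omega\ \forall s \in [0,t],\, B_t \in dy)$. Set $y = x$, and, using the symmetry of the horn condition, assume without loss of generality that $x_1 \ge 0$. The key step is to split according to the event
\[
A = \bigl\{B^{(1)}_s > x_1/2 \text{ for all } s \in [0,t]\bigr\}.
\]

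On $A$ we have $B^{(1)}_s > x_1/2 > 0$, so the horn hypothesis forces $\Omega(B^{(1)}_s) \subset \Omega(x_1/2)$; hence $\{B_s \in \Omega\ \forall s\} \subset \{B'_s \in \Omega(x_1/2)\ \forall s\}$. On $A^c$ we use the trivial $\Omega(B^{(1)}_s) \subset \Omega'$. Because $B^{(1)}$ and $B'$ are independent, each contribution factors as a one-dimensional density for the first coordinate multiplied by the $(m-1)$-dimensional Dirichlet heat kernel of the relevant cross-section evaluated at $(x',x')$. Combining,
\[
p_\Omega(x,x;t) \le c_A\,\pi_{\Omega(x_1/2)}(x',x';t) + c_{A^c}\,\pi_{\Omega'}(x',x';t),
\]
where $c_A$ and $c_{A^c}$ denote the densities at $x_1$ of $\Pa_{x_1}(A,\,B^{(1)}_t \in \cdot)$ and $\Pa_{x_1}(A^c,\,B^{(1)}_t \in \cdot)$ respectively.

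The two 1D coefficients are then computed via the reflection principle. First, $c_A \le p_{\R}(x_1,x_1;t) = (4\pi t)^{-1/2}$. Second, $c_{A^c}$ is the density at $x_1$ of paths from $x_1$ that cross the level $x_1/2$ before time $t$; reflecting such paths about $x_1/2$ identifies this with the density at $0$ of free Brownian motion started at $x_1$, i.e.\ $p_{\R}(x_1,0;t) = (4\pi t)^{-1/2} e^{-x_1^2/(4t)}$. Inserting these two values gives precisely \eqref{e88}.

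The main technical subtlety is a rigorous handling of the ``conditional density'' shorthand used above; this can be bypassed either by running the argument on bounded measurable test functions in $dy$ and then differentiating, or, equivalently, by rewriting $p_\Omega(x,x;t) = (4\pi t)^{-m/2}\Pa^{x,x;t}(\tau_\Omega > t)$ via the Brownian bridge from $x$ to $x$ and making the analogous geometric split of the bridge path. The edge case $x_1 = 0$ is consistent: $A$ has probability zero, the first term drops out, and the second carries the factor $e^{0}=1$, as the statement asserts.
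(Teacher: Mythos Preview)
Your argument is correct and follows the same strategy as the paper: use the Feynman--Kac representation, split the first-coordinate path according to whether it stays above the level $x_1/2$, exploit the horn monotonicity to bound the cross-sections by $\Omega(x_1/2)$ on one piece and by $\Omega'$ on the other, and factor via independence of $B^{(1)}$ and $B'$.

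The only cosmetic difference is in how the one-dimensional coefficients are obtained. The paper works with the Brownian bridge from $x$ to $x$, writes down the explicit density $\tfrac{2\xi}{t}e^{-\xi^2/t}$ of the maximal excursion of the first-coordinate bridge, and integrates it over $[0,x_1/2]$ and $[x_1/2,\infty)$; this yields $(4\pi t)^{-1/2}(1-e^{-x_1^2/(4t)})$ and $(4\pi t)^{-1/2}e^{-x_1^2/(4t)}$ for the two pieces, the first of which is then bounded by $(4\pi t)^{-1/2}$. You instead read off $c_A$ and $c_{A^c}$ directly from the reflection principle for Brownian motion, which is a shade quicker and avoids the intermediate integration. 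Either way one arrives at exactly the same bound \eqref{e88}.
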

\begin{proof}
The proof relies on the Feynman-Kac formula (\cite{DR}). We have that for any non-empty open set $\Omega$ in $\R^m$,
\begin{equation}\label{e89}
p_{\Omega}(x,y;t)=(4\pi t)^{-m/2}e^{-|x-y|^2/(4t)}\mathbb{P}\big(\cup_{0\le \tau\le t}x(\tau)\subset \Omega|\,\,x(0)=x,x(t)=y\big),
\end{equation}
where $\{x(\tau),\,0\le \tau\le t\}$ is a Brownian bridge on $\R^m$. The term $\mathbb{P}\big(\cup_{0\le \tau\le t}x(\tau)\subset \Omega|\,x(0)=x,x(t)=y\big)$ in \eqref{e89}
is the conditional probability that the Brownian bridge stays in $\Omega$, conditioned with $x(0)=x,\, x(t)=y$. We write $x(\tau)=(x_1(\tau),x'(\tau))$ with $x_1(0)=x_1,\, x_1(t)=y_1,\,x'(0)=x',\, x'(t)=y',$
where $\{x_1(\tau),0\le \tau\le t\}$, and $\{x'(\tau),0\le \tau\le t\}$ are independent Brownian bridges.

For $\xi>0$, and $x_1<\xi,\,y_1<\xi$, we have by the reflection principle,
\begin{equation*}
p_{(-\infty,\xi)}(x_1,y_1;t)=\frac{1}{(4\pi t)^{1/2}}\big(e^{-(x_1-y_1)^2/(4t)}-e^{-(2\xi-x_1-y_1)^2/(4t)}\big).
\end{equation*}
By \eqref{e89},
\begin{equation*}
\mathbb{P}\big(\max_{0\le \tau\le t} x_1(\tau)\le \xi|\,x_1(0)=x_1,\,x_1(t)=y_1\big)=1-e^{-(\xi-x_1)(\xi-y_1)/t},\, x_1<\xi,\,y_1<\xi.
\end{equation*}
For $x_1=y_1=0,\,\xi>0,$ we have
\begin{equation*}
\mathbb{P}\big(\max_{0\le \tau\le t} x_1(\tau)\le \xi|\,x_1(0)=x_1(t)=0\big)=1-e^{-\xi^2/t}.
\end{equation*}
We arrive at the well-known formula for the density of the maximum of a one-dimensional Brownian bridge,
\begin{equation}\label{e90a}
\mathbb{P}\big(\max_{0\le \tau\le t} x_1(\tau)\in d\xi|\,x_1(0)=x_1(t)=0\big)=\frac{2\xi}{t}e^{-\xi^2/t}{\bf1}_{[0,\infty)}(\xi)d\xi.
\end{equation}
We first consider the case $x_1>0$. By \eqref{e89}, and \eqref{e90a},
\begin{align}\label{e91}
p_{\Omega}(x,x;t)&=(4\pi t)^{-m/2}\mathbb{P}\big(\cup_{0\le \tau\le t}x(\tau)\subset \Omega|\,x(0)=x(t)=x\big)\nonumber \\ &
\le (4\pi t)^{-m/2}\int_0^{x_1/2}d\xi\frac{2\xi}{t}e^{-\xi^2/t}\mathbb{P}\big(\cup_{0\le \tau\le t}x'(\tau)\subset \Omega(x_1-\xi)|x'(0)=x'(t)=x'\big)\nonumber \\ &\hspace{4mm}+
(4\pi t)^{-m/2}\int_{x_1/2}^{\infty}d\xi\frac{2\xi}{t}e^{-\xi^2/t}\mathbb{P}\big(\cup_{0\le \tau\le t}x'(\tau)\subset \Omega'|x'(0)=x'(t)=x'\big)\nonumber \\ &\le
(4\pi t)^{-1/2}\int_0^{x_1/2}d\xi\frac{2\xi}{t}e^{-\xi^2/t}\pi_{\Omega(x_1/2)}(x',x';t)\nonumber \\ &\hspace{4mm}+(4\pi t)^{-1/2}\int_{x_1/2}^{\infty}d\xi\frac{2\xi}{t}e^{-\xi^2/t}\pi_{\Omega'}(x',x';t)\nonumber \\ &\le
(4\pi t)^{-1/2}\pi_{\Omega(x_1/2)}(x',x';t)+(4\pi t)^{-1/2}e^{-x_1^2/(4t)}\pi_{\Omega'}(x',x';t),
\end{align}
where we have used that $\Omega(x_1-\xi)\subset \Omega'$ for $\xi\ge x_1/2$ in the third line, and that $\Omega(x_1-\xi)\subset \Omega(x_1/2)$ for $\xi\in[0,x_1/2)$ in the fourth line. We next consider the case $x_1<0$.
By \eqref{e89}, and \eqref{e90a},
\begin{align}\label{e91a}
p_{\Omega}(x,x;t)&=(4\pi t)^{-m/2}\mathbb{P}\big(\cup_{0\le \tau\le t}x(\tau)\subset \Omega|\,x(0)=x(t)=x\big)\nonumber \\ &
\le (4\pi t)^{-m/2}\int_0^{|x_1|/2}d\xi\frac{2\xi}{t}e^{-\xi^2/t}\mathbb{P}\big(\cup_{0\le \tau\le t}x'(\tau)\subset \Omega(x_1+\xi)|\,x'(0)=x'(t)=x'\big)\nonumber \\ &\hspace{4mm}+
(4\pi t)^{-m/2}\int_{|x_1|/2}^{\infty}d\xi\frac{2\xi}{t}e^{-\xi^2/t}\mathbb{P}\big(\cup_{0\le \tau\le t}x'(\tau)\subset \Omega'|\,x'(0)=x'(t)=x'\big)\nonumber \\ &\le
(4\pi t)^{-1/2}\int_0^{|x_1|/2}d\xi\frac{2\xi}{t}e^{-\xi^2/t}\pi_{\Omega(x_1/2)}(x',x';t)\nonumber \\ &\hspace{4mm}+(4\pi t)^{-1/2}\int_{|x_1|/2}^{\infty}d\xi\frac{2\xi}{t}e^{-\xi^2/t}\pi_{\Omega'}(x',x';t)\nonumber \\ &\le
(4\pi t)^{-1/2}\pi_{\Omega(x_1/2)}(x',x';t)+(4\pi t)^{-1/2}e^{-x_1^2/(4t)}\pi_{\Omega'}(x',x';t),
\end{align}
where we have used that $\Omega(x_1+\xi)\subset \Omega'$ for $\xi\ge |x_1|/2$ in the third line, and that $\Omega(x_1+\xi)\subset \Omega(x_1/2)$ for $\xi\in[0,|x_1|/2)$ in the fourth line. Combining \eqref{e91} and \eqref{e91a} gives \eqref{e88}.
\end{proof}

\noindent{\it Proof of Theorem \textup{\ref{the4}}.}
We apply Lemma \ref{lem2} to the $(m-1)$-dimensional heat kernels $\pi_{\Omega(x_1/2)}$, and $\pi_{\Omega'}$ respectively, and obtain that for
\begin{equation}\label{e91b}
t\ge \frac{m-1}{2\mu(\Omega')}
\end{equation}
both
\begin{equation}\label{e93}
\pi_{\Omega(x_1/2)}(x',x';t)\le\bigg(\frac{e}{2\pi (m-1)}\bigg)^{(m-1)/2}\mu(\Omega(x_1/2))^{(m-1)/2}e^{-t\mu(\Omega(x_1/2))},
\end{equation}
and
\begin{align}\label{e94}
\pi_{\Omega'}(x'x';t)\le \bigg(\frac{e}{2\pi (m-1)}\bigg)^{(m-1)/2}\mu(\Omega')^{(m-1)/2}e^{-t\mu(\Omega')}.
\end{align}
Indeed, \eqref{e91b} implies $t\ge \frac{m-1}{2\mu(\Omega(x_1/2))}$ by domain monotonicity. For $t$ satisfying \eqref{e91b},
\begin{equation}\label{e95}
(4\pi t)^{-1/2}\le (\mu(\Omega')/(2\pi(m-1)))^{1/2},
\end{equation}
and we obtain, by Lemma \ref{lem3}, \eqref{e93}, \eqref{e94}, and \eqref{e95}, that for $t$ satisfying \eqref{e91b},
\begin{equation}\label{e96}
p_{\Omega}(x,x;t)\le  e^{-1/2}\bigg(\frac{e}{2\pi (m-1)}\bigg)^{m/2}\big(\mu(\Omega(x_1/2))^{m/2}e^{-t\mu(\Omega(x_1/2))}+\mu(\Omega')^{m/2}e^{-x_1^2/(4t)-t\mu(\Omega')}\big).
\end{equation}
Bounding the left-hand side of \eqref{e82} from below by $e^{-t\lambda}u_{\Omega}(x)^2$ we find by \eqref{e96} that if \eqref{e91b} holds, then
\begin{equation*}
u_{\Omega}(x)^2\le e^{-1/2}\bigg(\frac{e}{2\pi (m-1)}\bigg)^{m/2}\big(\mu(\Omega(x_1/2))^{m/2}e^{-t(\mu(\Omega(x_1/2))-\lambda)}+\mu(\Omega')^{m/2}e^{-x_1^2/(4t)-t(\mu(\Omega')-\lambda)}\big).
\end{equation*}
It follows that if \eqref{e91b} holds, then
\begin{align}\label{e98}
\{u_{\Omega}^2(x)\ge \varepsilon^2&\}\subset\big\{x\in\Omega:e^{-1/2}\bigg(\frac{e}{2\pi (m-1)}\bigg)^{m/2}\mu(\Omega(x_1/2))^{m/2}e^{-t(\mu(\Omega(x_1/2))-\lambda)}\ge\frac{\varepsilon^2}{2}\big\}\nonumber \\ &\hspace{5mm}\cup\big\{x\in\Omega: e^{-1/2}\bigg(\frac{e}{2\pi (m-1)}\bigg)^{m/2}\mu(\Omega')^{m/2}e^{-x_1^2/(4t)-t(\mu(\Omega')-\lambda)}\ge\frac{\varepsilon^2}{2}\big\}\nonumber \\ &=
\big\{x\in\Omega:2^{1/2}e^{-1/4}\big(\frac{e}{2\pi (m-1)}\bigg)^{m/4}\mu(\Omega(x_1/2))^{m/4}e^{-t(\mu(\Omega(x_1/2))-\lambda)/2}\ge\varepsilon\big\}\nonumber \\ &\hspace{5mm}\cup
\big\{x\in\Omega: 2^{1/2}e^{-1/4}\big(\frac{e}{2\pi (m-1)}\bigg)^{m/4}\mu(\Omega')^{m/4}e^{-x_1^2/(8t)-t(\mu(\Omega')-\lambda)/2}\ge\varepsilon\big\}\nonumber \\ &:=A_1\cup A_2,
\end{align}
with obvious notation. We choose
\begin{equation*}
t=(2(\lambda-\mu(\Omega')))^{-1},
\end{equation*}
and let
\begin{equation*}
\varepsilon\in(0,\mu(\Omega')^{m/4}].
\end{equation*}
Then the constraint on $t$ in \eqref{e91b} is satisfied for all $\Omega$ satisfying \eqref{e83a}. For the above choice of $t$ we have
\begin{equation*}
A_1\subset\big\{x\in \Omega:\frac{\mu(\Omega(x_1/2))-\mu(\Omega')}{4(\lambda-\mu(\Omega'))}<\log\big(\varepsilon^{-1}\mu(\Omega(x_1/2))^{m/4}\big)\big\},
\end{equation*}
\begin{equation}\label{e101}
|A_1|\le |\Omega'|_{m-1}\big|\big\{x_1\in\R:\frac{\mu(\Omega(x_1/2))-\mu(\Omega')}{4(\lambda-\mu(\Omega'))}<\log\big(\varepsilon^{-1}\mu(\Omega(x_1/2))^{m/4}\big)\big\}\big|_1,
\end{equation}
\begin{equation*}
A_2\subset\big\{x\in\Omega:x_1^2(\lambda-\mu(\Omega'))<4\log\big(\varepsilon^{-1}\mu(\Omega')^{m/4}\big)\},
\end{equation*}
and
\begin{equation}\label{e103}
|A_2|\le 4|\Omega'|_{m-1}(\lambda-\mu(\Omega'))^{-1/2}\big(\log\big(\varepsilon^{-1}\mu(\Omega')^{m/4}\big)\big)^{1/2}.
\end{equation}
By \eqref{e79}, \eqref{e98}, \eqref{e101}, and \eqref{e103}, we obtain
\begin{align*}
\frac{1}{|\Omega|}\|u_{\Omega}\|^2_1\le &2\varepsilon^2|\Omega| +\frac{2|\Omega'|_{m-1}}{|\Omega|}\big|\big\{x_1\in\R:\frac{\mu(\Omega(x_1/2))-\mu(\Omega')}{4(\lambda-\mu(\Omega'))}<\log\big(\varepsilon^{-1}\mu(\Omega(x_1/2))^{m/4}\big)\big\}\big|_1\nonumber\\&+
\frac{8|\Omega'|_{m-1}}{|\Omega|}(\lambda-\mu(\Omega'))^{-1/2}\big(\log\big(\varepsilon^{-1}\mu(\Omega')^{m/4}\big)\big)^{1/2}.
\end{align*}
Substitution of $\varepsilon^2|\Omega|=\varepsilon'$, and deleting the $'$ yields \eqref{e83} for all $\varepsilon$ satisfying \eqref{e83aa}.
\hspace*{\fill }$\square $

\vspace{10mm}
\noindent{\it Proof of Corollary \textup{\ref{cor1}}.}
Let
\begin{equation*}
x_1(\Omega)^+:=\sup\{x_1:\Omega(x_1)\ne \emptyset\}<\infty, \, x_1(\Omega)^-:=\inf\{x_1:\Omega(x_1)\ne \emptyset\}>-\infty.
\end{equation*}
Let $x_{\Omega}^+,x_{\Omega}^-$ be points of $\partial\Omega$ with $x_1$ coordinates $x_1(\Omega)^+$ and $x_1(\Omega)^-$ respectively.
By convexity $\Omega$ contains triangles with bases $\Omega'$ and vertices  $x_{\Omega}^+$ and  $x_{\Omega}^-$ respectively. Hence
for any $x=(x_1,x')\in \Omega$, $\frac12 x_1(\Omega)^-\le x_1/2\le \frac12 x_1(\Omega)^+,$ and $\Omega(x_1/2)$ contains a line segment with length at least $\frac12 |\Omega'|_1$. So
$$\mu(\Omega(x_1/2))\le 4\mu(\Omega')=\frac{4\pi^2}{|\Omega'|_1^2}.$$
This, together with \eqref{e83} for $m=2$, proves \eqref{e114}.
\hspace*{\fill }$\square $
\vspace{5mm}

P. Kr\"oger observed that one can get upper bounds for the first Dirichlet eigenvalue of the circular sector $S_n(r)$ with radius $r$ and opening angle $\pi/n$, which have the correct leading term by choosing an optimal rectangle inside the sector \cite{kro}. Similar observations were used in the proof of Theorem 1.5 in \cite{MvdB3}, and also in the proof of Theorem 1.3 in \cite{GJ}.\\

\noindent{\it Proof of Example \textup{\ref{exa1}}.}
Theorem 1.5 in \cite{MvdB3} implies the existence of a constant $c_1<\infty$ such that
\begin{equation}\label{e104}
\lambda(\Omega_{n})\le \pi^2+c_1n^{-2/3},\, n\in \N.
\end{equation}
We note that $\Omega_n$ is horn-shaped with respect to the coordinate system which defines it in Example \ref{exa1}. Note that $|\Omega'_n|_1=1$.  Straightforward computations show,
\begin{equation*}
\mu(\Omega_n(x_1))=\pi^2\bigg(1-\frac{x_1}{b_n}\bigg)^{-2},\, 0<x_1<b_n,
\end{equation*}
\begin{equation*}
\mu(\Omega_n(x_1))=\pi^2\bigg(1-\frac{|x_1|}{n-b_n}\bigg)^{-2},\, b_n-n<x_1<0,
\end{equation*}
\begin{equation}\label{e106}
\mu(\Omega_n(x_1/2))\ge\pi^2\bigg(1+\frac{x_1}{b_n}\bigg),\, 0<x_1<b_n,
\end{equation}
\begin{equation}\label{e106a}
\mu(\Omega_n(x_1/2))\ge\pi^2\bigg(1+\frac{|x_1|}{n-b_n}\bigg),\, b_n-n<x_1<0,
\end{equation}
and
\begin{equation}\label{e107}
|\Omega_n|=\frac{n}{2}.
\end{equation}
By \eqref{e104} we see that \eqref{e83a} holds for all
\begin{equation*}
n\ge N_{\Omega}:=\min\{n\in \N:n^{2/3}\ge \pi^{-2}c_1\}.
\end{equation*}
We obtain by Corollary \ref{cor1}, \eqref{e104}, \eqref{e106}, \eqref{e106a} and \eqref{e107} that for
\begin{equation}\label{e104a}
\lambda=\pi^2+c_1n^{-2/3},
\end{equation}
\begin{align}\label{e109}
\frac{2|\Omega_n'|_{1}}{|\Omega_n|}&\big|\big\{x_1\in\R:\frac{|\Omega_n'|^2_{1}\mu(\Omega_n(x_1/2))-\pi^2}{2(|\Omega_n'|^2_{1}\lambda-\pi^2)}
\le\log\big(4\pi^2\varepsilon^{-1}|\Omega_n'|^{-2}_{1}|\Omega_n|\big)\big\}\big|_1\nonumber \\ &\le 8\pi^{-2}c_1n^{-2/3}\log\big(2\pi^2\varepsilon^{-1}n\big).
\end{align}
The third term in the right-hand side of \eqref{e83} equals by \eqref{e104a},
\begin{equation}\label{e110}
\frac{2^{7/2}}{n^{2/3}}c_1^{-1/2}\big(\log\big(2^{-1}\pi^2\varepsilon^{-1}n\big)\big)^{1/2}.
\end{equation}
We find for $n\ge N_{\Omega}$, and $\varepsilon\in (0,2^{-1}\pi^2n]$ by \eqref{e109}, \eqref{e110}, and \eqref{e83},
\begin{align}\label{e111}
\frac{1}{|\Omega_n|}\|u_{\Omega_n}\|_1^2\le 2\varepsilon+8\pi^{-2}c_1n^{-2/3}\log\big(2\pi^2\varepsilon^{-1}n\big)+2^{7/2}c_1^{-1/2}n^{-2/3}\big(\log\big(2^{-1}\pi^2\varepsilon^{-1}n\big)\big)^{1/2}.
\end{align}
Choosing $\varepsilon=n^{-2/3}$ gives that the right-hand side of \eqref{e111} is $O\big(n^{-2/3}\log n\big).$ This implies localisation by Lemma \ref{lem0}, and \eqref{e85} follows by \eqref{e19} and \eqref{e111} for that choice of $\varepsilon$.
\hspace*{\fill }$\square $

\vspace{5mm}

\noindent{\it Proof of Example \textup{\ref{exa2}}.}
By choosing an optimal rectangle in $T_n$ one shows, similarly to \eqref{e104}, the existence of $c_3<\infty$ such that $\lambda(T_n) \le \pi^2+c_3n^{-2/3}$.
By domain monotonicity of the Dirichlet eigenvalues, and \eqref{e104a},
\begin{equation}\label{e116}
\lambda(R_n)\le \lambda(\Omega_n)\le \lambda(T_n)\le \pi^2+c_3n^{-2/3}.
\end{equation}
Furthermore we have,
\begin{equation*}
\frac{n}{4}=|T_n|\le |\Omega_n|\le |R_n|=\frac{n}{2},\,\,\mu(R_n')=\pi^2, |R_n'|_1=1.
\end{equation*}
By domain monotonicity of the Dirichlet heat kernels, we have for $\lambda\ge \lambda(\Omega_n)$,
\begin{align*}
e^{-t\lambda} u_{\Omega_n}(x)^2&\le e^{-t\lambda(\Omega_n)}u_{\Omega_n}(x)^2\le p_{\Omega_n}(x,x;t) \le p_{R_n}(x,x;t)\nonumber \\ &\le (4\pi t)^{-1/2}\pi_{R_n(x_1/2)}(x',x';t)+(4\pi t)^{-1/2}e^{-x_1^2/(4t)}\pi_{R_n'}(x',x';t).
\end{align*}
Adapting the proof of Theorem \ref{the4} from \eqref{e93} onwards, and adapting Corollary \ref{cor1}, gives for all $n$ sufficiently large, $\lambda\ge \lambda(\Omega_n)$, and $\varepsilon\le \frac{\pi^2n}{4}$,
\begin{align*}
\frac{1}{|\Omega_n|}\|u_{\Omega_n}\|_1^2\le & 2\varepsilon+\frac{2|R_n'|_{1}}{|\Omega_n|}\big|\big\{x_1\in\R:\frac{|R_n'|^2_{1}\mu(R_n(x_1/2))-\pi^2}{2(|R_n'|^2_{1}\lambda-\pi^2)}
\le\log\big(4\pi^2\varepsilon^{-1}|R_n'|^{-2}_{1}|\Omega_n|\big)\big\}\big|_1\nonumber \\ &\hspace{4mm}+\frac{2^{5/2}|R_n'|_1^2}{|\Omega_n|}(|R_n'|^2_{1}\lambda-\pi^2)^{-1/2}\big(\log\big(\pi^2\varepsilon^{-1}|R_n'|^{-2}_{1}|\Omega_n|\big)\big)^{1/2}\nonumber \\ \le &2\varepsilon +\frac{8}{n}\big|\big\{x_1\in\R:\frac{\mu(R_n'(x_1/2))-\pi^2}{2(\lambda-\pi^2)}
\le\log\big(2\pi^2\varepsilon^{-1}n\big)\big\}\big|_1\nonumber \\ &\hspace{4mm}+\frac{2^{9/2}}{n}(\lambda-\pi^2)^{-1/2}\big(\log\big(2^{-1}\pi^2\varepsilon^{-1}n\big)\big)^{1/2},
\end{align*}
where we have used \eqref{e116}. We now choose $\lambda= \pi^2+c_3n^{-2/3}$, and use \eqref{e106} and \eqref{e106a} with $b_n=\frac{n}{2}$. This gives
\begin{equation*}
\frac{1}{|\Omega_n|}\|u_{\Omega_n}\|_1^2\le2\varepsilon+16\pi^{-2}c_3n^{-2/3}\log\big(2\pi^2\varepsilon^{-1}n\big)+2^{9/2}c_3^{-1/2}n^{-2/3}\big(\log\big(2^{-1}\pi^2\varepsilon^{-1}n\big)\big)^{1/2}.
\end{equation*}
We choose $\varepsilon=n^{-2/3}$ which gives \eqref{e85}. This proves localisation by Lemma \ref{lem0}.
\hspace*{\fill }$\square $

\vspace{5mm}

\noindent{\it Proof of Example \textup{\ref{exa3}}.}
Theorem 1.5 in \cite{MvdB3} implies the existence of a constant $c(\alpha)\in (1,\infty)$ such that
\begin{equation}\label{e120}
\lambda(\Omega_{n,\alpha})\le j_{(m-2)/2}^2+c(\alpha)n^{-2\alpha/(\alpha+2)},\, n\in \N,
\end{equation}
where $\mu(\{x'\in\R^{m-1}:|x'|<1\})=j_{(m-2)/2}^2$.
For $-\frac n2<x_1<\frac n2,$ $\Omega(x_1)$ is an $(m-1)$-dimensional disc with radius $\big(1-(2|x_1|/n)^\alpha\big)^{1/\alpha}.$ Hence,
\begin{align}\label{e121}
\mu(\Omega(x_1/2))&=j_{(m-3)/2}^2\big(1-\big(n^{-1}|x_1|\big)^{\alpha}\big)^{-2/\alpha}\nonumber \\ &
\ge j_{(m-3)/2}^2\bigg(1+2\alpha^{-1}\big(n^{-1}|x_1|\big)^{\alpha}\big),
\end{align}
and
\begin{equation}\label{e121b}
|\Omega'_{n,\alpha}|_1=|\{x'\in\R^{m-1}:|x'|<1\}|_{m-1}=\omega_{m-1}, \, |\Omega_{n,\alpha}|=\omega_mn/2,
\end{equation}
and $\omega_m$ is the measure of the ball with radius $1$ in $\R^m$.
For $\varepsilon\in (0,2^{-1}\omega_mj_{(m-3)/2}^mn]$, $n$ sufficiently large, and $\lambda=j_{(m-2)/2}^2+c(\alpha)n^{-2\alpha/(\alpha+2)}\ge \lambda(\Omega_{n,\alpha})$, we have
\begin{align}\label{e122}
4\frac{\omega_{m-1}}{\omega_mn}\big|\big\{x_1\in\R:&\frac{\mu(\Omega_{n,\alpha}(x_1/2))-j_{(m-3)/2}^2}{2(\lambda-j_{(m-3)/2}^2)}
\le\log\big(2j_{(m-3)/2}^m\omega_m\varepsilon^{-1}n\big)\big\}\big|_1\nonumber \\ &\le \frac{4\omega_{m-1}}{\omega_m}\big(\alpha c(\alpha)/j_{(m-3)/2}^2\big)^{1/{\alpha}}n^{-2/(\alpha+2)}\big(\log\big(2j_{(m-3)/2}^m\omega_m\varepsilon^{-1}n\big)\big)^{1/\alpha}.
\end{align}
Similarly we find for $\varepsilon\in (0,2^{-1}\omega_mj_{(m-3)/2}^mn]$, and all $n$ sufficiently large,
\begin{align}\label{e123}
2^{7/2}\frac{\omega_{m-1}}{\omega_mn}(\lambda-j_{(m-3)/2}^2)^{-1/2}\big(\log\big(\pi^2\varepsilon^{-1}n\big)\big)^{1/2}\le 2^{7/2}c(\alpha)^{-1/2}n^{-2/(\alpha+2)}\big(\log\big(j_{(m-3)/2}^m\omega_m\varepsilon^{-1}n/2\big)\big)^{1/2}.
\end{align}
Choosing $\varepsilon=n^{-2/(\alpha+2)}$ gives \eqref{e85b} by Corollary \ref{cor1}, \eqref{e120}, \eqref{e121}, \eqref{e121b}, \eqref{e122} and \eqref{e123}. Lemma \ref{lem0} and \eqref{e85b} imply localisation.
\hspace*{\fill }$\square $

\section{Proof of Theorem \ref{the5}\label{sec4}}

{\it Proof of Theorem \textup{\ref{the5}.}} Choosing $\varphi(x)=\sin(\pi(|x|-R)/\varepsilon)$ as a test function in \eqref{e3}, we have that
\begin{align}\label{e27}
\lambda(\Omega_{R,R+\eps}) &\le
 \frac{\pi^{2}}{\eps^{2}}\dfrac{\ds\int_{R}^{R+\eps} \ds\cos^{2}\left(\pi(r-R)/\eps \right) r^{m-1}dr }{\ds\int_{R}^{R+\eps} \ds\sin^{2}\left(\pi(r-R)/\eps \right) r^{m-1}dr}\nonumber \\ &\le
\frac{\pi^{2}}{\eps^{2}} \left(\frac{R+\eps}{R}\right)^{m-1} \dfrac{\ds\int_{R}^{R+\eps} \ds\cos^{2}\left(\pi(r-R)/\eps \right) dr }{\ds\int_{R}^{R+\eps} \ds\sin^{2}\left(\pi(r-R)/\eps \right)dr}\nonumber \\ &=\frac{\pi^{2}}{\eps^{2}} \left(\frac{R+\eps}{R}\right)^{m-1}.
\end{align}
On the other hand, since the first Dirichlet eigenfunction of $\Omega_{R,R+\eps}$ is radial, $u_{\Omega_{R,R+\eps}}(x):=u(r)$, we have
\begin{align}\label{e28}
\lambda(\Omega_{R,R+\eps})&=\dfrac{\ds\int_{R}^{R+\eps}u'(r)^{2}r^{m-1}dr}{\ds\int_{R}^{R+\eps}u(r)^{2}r^{m-1}dr}\nonumber \\ &\ge \left(\frac{R}{R+\eps}\right)^{m-1}  \dfrac{\ds\int_{R}^{R+\eps}u'(r)^{2} dr}{\ds\int_{R}^{R+\eps}u(r)^{2} dr} \nonumber \\ & \ge \left(\frac{R}{R+\eps}\right)^{m-1}
\min_{v\in H_{0}^{1}(R,R+\eps)\setminus\{0\}}   \dfrac{\ds\int_{R}^{R+\eps}v'(r)^{2} dr}{\ds\int_{R}^{R+\eps}v(r)^{2} dr}\nonumber \\ & =\frac{\pi^{2}}{\eps^{2}}\left(\frac{R}{R+\eps}\right)^{m-1},
\end{align}
and \eqref{e22} follows from \eqref{e27} and \eqref{e28}.

To prove \eqref{e23} we consider the radial solution $\psi_{\eps}(|x|)=u_{\eps}(x)$ of
\begin{equation*}
-\Delta u_{\Omega_{R,R+\eps}}=\lambda(\Omega_{R,R+\eps})u_{\Omega_{R,R+\eps}},
\end{equation*}
with zero boundary condition, and $\|\psi_{\eps}\|_{\infty}=1$. The function $\psi_{\eps}$ satisfies
\begin{equation*}
\psi_{\eps}''+\dfrac{m-1}{r}\psi_{\eps}'+\lambda_{\eps}\psi_{\eps}=0\,\,\text{in}\,\,(R,R+\eps),
\end{equation*}
with boundary condition $\psi_{\eps}(R)=\psi_{\eps}(R+\eps)=0$, and normalisation $\|\psi_{\eps}\|_{\infty}=1$, where $\lambda_{\eps}=\lambda(\Omega_{R,R+\eps})$. Define
\begin{equation*}
\phi_{\eps}(t)=\psi_{\eps}(R+\eps t),\quad t\in (0,1).
\end{equation*}
Then $\phi_{\eps}$ satisfies
\begin{equation}\label{e32}
\left\{
\begin{array}{ll}
\phi_{\eps}''+\dfrac{(m-1)\eps}{R+\eps t}\phi_{\eps}'+\eps^{2}\lambda_{\eps}\phi_{\eps}=0&\,\text{in }(0,1),\\[.3cm]
\phi_{\eps}(0)=\phi_{\eps}(1)=0,\\[.3cm]
\|\phi_{\eps}\|_{\infty}=1.
\end{array}
\right.
\end{equation}
Integrating between the maximum point $t_{m}$ of $\phi$ and $t\in (0,1)$, we get that
\begin{align}\label{e33}
|\phi_{\eps}'(t)|&=\left|\int_{t_m}^{t}\left(\frac{(m-1)\eps\phi'_{\eps}(t)}{R+\eps t}+\eps^{2}\lambda_{\eps}\phi_{\eps}(t)\right)dt\right|\nonumber \\ &\le (m-1)\left( \frac{2\eps}{R}+\eps^{2}\lambda_{\eps}\right).
\end{align}
Hence $\phi_{\eps},\phi_{\eps}'$ are equibounded in $(0,1)$ and, by the Arzel\`a-Ascoli Theorem, $\phi_{\eps}$ converges uniformly, as $\eps\to 0^{+}$, to a continuous function $\phi(t)$ in $(0,1)$. From \eqref{e32}, \eqref{e33}, we also obtain equiboundedness of the second derivatives $\phi_{\eps}''$. Hence $\phi_{\eps}$ converges uniformly to $\phi$ in $C^{1}$. Moreover we obtain uniform convergence of the second derivatives $\phi_{\eps}''$. Passing to the limit in the equation, we infer that $\phi$ satisfies
\begin{equation*}
\left\{
\begin{array}{ll}
\phi''+\pi^{2}\phi=0&\text{in }(0,1),\\[.3cm]
\phi(0)=\phi(1)=0,\\[.3cm]
\|\phi\|_{\infty}=1.
\end{array}
\right.
\end{equation*}
Hence $\phi(t)=\sin (\pi t)$, and
\begin{equation}\label{e35}
\lim_{\eps\downarrow 0}\int_{[0,1]}\phi_{\eps}(t)dt=\int_{[0,1]}\phi(t)dt=\frac2\pi.
\end{equation}
So we obtain
\begin{align*}
E(\Omega_{R,R+\eps})&=|\Omega_{R,R+\eps}|^{-1}\int_{\Omega_{R,R+\eps}}\psi_{\eps}\nonumber \\ &
\ge \bigg(\frac{R}{R+\eps}\bigg)^{m-1}\int_{[0,1]}\phi_{\eps}(t)dt,
\end{align*}
and, by \eqref{e35},
\begin{equation*}
\liminf_{\eps\downarrow 0}E(\Omega_{R,R+\eps})\ge \frac2\pi.
\end{equation*}
Similarly we have
\begin{equation*}
E(\Omega_{R,R+\eps})\le \bigg(\frac{R+\eps}{R}\bigg)^{m-1}\int_{[0,1]}\phi_{\eps}(t)dt,
\end{equation*}
and, by \eqref{e35},
\begin{equation*}
\limsup_{\eps\downarrow 0}E(\Omega_{R,R+\eps})\le \frac2\pi.
\end{equation*}

To prove \eqref{e24} we consider an equilateral triangle $\triangle$ with vertices at $(0,0),(1,0),(\frac12,\frac12 \sqrt 3)$. The first Dirichlet eigenfunction is given by (formula (2.1) in \cite{S}),
\begin{equation*}
u_{\triangle}(x_1,x_2)=\sin\bigg(\frac{4\pi x_2}{\sqrt 3}\bigg)-\sin\bigg(2\pi\bigg(x_1+\frac{x_2}{\sqrt 3}\bigg)\bigg)+\sin\bigg(2\pi \bigg(x_1-\frac{x_2}{\sqrt 3}\bigg)\bigg).
\end{equation*}
We find that $|\triangle|=\frac{\sqrt 3}{4}$,
\begin{equation*}
\|u_{\triangle}\|_{\infty}=u(1/2,\sqrt 3/6)=\frac{3\sqrt 3}{2},
\end{equation*}
and
\begin{equation*}
\|u_{\triangle}\|_1=\int_{\triangle}u(x_1,x_2)dx_1\,dx_2=\frac{9}{4\pi\sqrt 3}.
\end{equation*}
This proves \eqref{e24}.

The efficiency of an interval is given by $\frac{2}{\pi}$. Formula \eqref{e25} follows by separation of variables. More generally if $\Omega_1$ and $\Omega_2$ are open and connected sets in
$\R^{m_1}$, and $\R^{m_2}$ respectively, and with finite measures
$|\Omega_1|_{m_1}$ and $|\Omega_2|_{m_2}$ respectively, then
\begin{equation*}
E(\Omega_1\times\Omega_2)=E(\Omega_1)E(\Omega_2),
\end{equation*}
where $\Omega_1\times\Omega_2$ is the Cartesian product in
$\R^{m_1+m_2}$.

To prove \eqref{e26} we let $B=\{x\in \R^2:|x|<1\}$. Then
\begin{equation*}
u_{B}(r,\theta)=J_0(j_0r), 0\le r<1,\, 0<\theta\leq 2\pi,
\end{equation*}
and
\begin{equation*}
\| u_{B} \|_1=\int_{[0,1]}dr\, r \int_{[0,2\pi)}d\theta\,J_0(j_0r)\approx 0.215882 (2\pi).
\end{equation*}
Since $\| u_{B} \|_{\infty}=J_0(0)=1$, we have that
\begin{equation*}
E(B)\approx 0.6782\frac{2}{\pi}.
\end{equation*}
\hspace*{\fill }$\square $

\section*{Acknowledgements}
The authors acknowledge support by the Leverhulme Trust through Emeritus Fellowship EM-2018-011-9, by GNAMPA of INdAM, and by a MIUR-PRIN 2017 grant ``Qualitative and quantitative aspects of nonlinear PDE's''. Michiel van den Berg wishes to thank Thomas Kappeler for helpful references to the literature.

\end{document}